\def \To{\longrightarrow}
\def \gr{\operatorname{gr}}
\def \id{\operatorname{id}}
\def \ord{\operatorname{ord}}
\def \unit{\epsilon}
\def \C{\mathcal{C}}
\def \D{\Delta}
\def \e{\varepsilon}
\def \R{\mathcal{R}}
\def \Z{\mathbb{Z}}
\numberwithin{equation}{section}
\newtheorem{theorem}{Theorem}[section]
\newtheorem{lemma}[theorem]{Lemma}
\newtheorem{proposition}[theorem]{Proposition}
\newtheorem{corollary}[theorem]{Corollary}
\begin{document}
\title[QUIVER-THEORETIC QUASITRIANGULARITY OF HOPF ALGEBRAS]
{ON QUIVER-THEORETIC DESCRIPTION FOR QUASITRIANGULARITY OF HOPF
ALGEBRAS}
\author{Hua-Lin Huang}
\address{School of Mathematics, Shandong University, Jinan 250100, China} \email{hualin@sdu.edu.cn}
\author{Gongxiang Liu}
\address{Department of Mathematics, Nanjing University, Nanjing 210093, China} \email{gxliu@nju.edu.cn}
\date{}
\maketitle

\begin{abstract}
This paper is devoted to the study of the quasitriangularity of Hopf
algebras via Hopf quiver approaches. We give a combinatorial
description of the Hopf quivers whose path coalgebras give rise to
coquasitriangular Hopf algebras. With a help of the quiver setting,
we study general coquasitriangular pointed Hopf algebras and obtain
a complete classification of the finite-dimensional ones over an
algebraically closed field of characteristic 0.

\vskip 10pt

\noindent{\bf Keywords} \ \ Hopf algebra, quasitriangularity, Hopf quiver \\
\noindent{\bf 2000 MR Subject Classification} \ \ 16W30, 16W35,
16G20
\end{abstract}

\section{introduction}

Quasitriangular Hopf algebras were introduced by Drinfeld, which
play a crucial role in his theory of quantum groups \cite{d1}. From
the viewpoint of representation theory, quasitriangular Hopf
algebras are close relatives of groups and Lie algebras, as their
representations form braided tensor categories. The construction and
classification of such Hopf algebras have attracted much attention
since they appeared. However, the general classification problem is
still widely open.

As is well-known, quiver methods are very useful in constructing
algebras and studying their representations. Very nice quiver
settings for elementary and pointed Hopf algebras have been built in
various works \cite{cr1,gs,cr2,voz} and shown their advantage in
classifying some interesting classes of Hopf algebras as well as
their representations, see for instance
\cite{c,g,mont2,chyz,ll,l,hl} and other related works. To us, it
seems sensible to ask: is it possible to describe Drinfeld's
quasitriangularity of Hopf algebras via combinatorial properties of
Hopf quivers introduced by Cibils and Rosso \cite{cr2}?

Our main aim is to provide such description and, by making use of
it, to contribute to the classification and representation theory of
quasitriangular Hopf algebras. In this paper we focus on
quasitriangular elementary Hopf algebras, or more precisely the dual
situation coquasitriangular pointed Hopf algebras. Recall that, for
a Hopf algebra, by elementary we mean the underlying algebra is
finite-dimensional and its simple modules are 1-dimensional, while
by pointed we mean the simple comodules of the underlying coalgebra
are 1-dimensional.

In some sense, what we are studying is the simplest class of
quasitriangular Hopf algebras. According to their proximity to
groups, quasitriangular elementary Hopf algebras should be viewed as
the counterpart of finite abelian groups, as they are
finite-dimensional and their simple modules are all 1-dimensional.
In this point of view, the class of quasitriangular pointed Hopf
algebras lies in the other extreme and so more complicated. Their
study is left for future work.

We build a quiver framework for our object and use the combinatorial
properties to study the quasitriangularity. Since the quiver
approach from the coalgebraic side is more convenient and admits a
wider scope (including infinite-dimensional case), and the dual of
quasitriangular elementary Hopf algebras are coquasitriangular
pointed, so we always work mainly on the latter and mention briefly
the dual cases for the former. First we determine the Hopf quivers
whose path coalgebras give rise to coquasitriangular Hopf algebras,
then we give a Gabriel type theorem for general coquasitriangular
pointed Hopf algebras, and finally we provide some examples to
elucidate the quiver setting.

We also apply the constructed quiver setting to classify
finite-dimensional coquasitriangular pointed Hopf algebras over an
algebraically closed field of characteristic 0. First we prove that
such Hopf algebras are generated by group-like and skew-primitive
elements. The proof relies on the result of Andruskiewitsch, Etingof
and Gelaki \cite{aeg} for the cotriangular case as well as an
observation motivated by our quiver setting. This partially confirms
a conjecture of Andruskiewitsch and Schneider \cite{as2} which says
that a finite-dimensional pointed Hopf algebras is generated by its
group-like and skew-primitive elements. Then we give an explicit
description of finite-dimensional coquasitriangular pointed Hopf
algebras via generators and defining relations by making use of the
quiver setting along with the lifting theorem of quantum linear
spaces due to Andruskiewitsch and Schneider \cite{as1}.

Throughout the paper, we work over a field $k.$ Vector spaces,
algebras, coalgebras, linear mappings, and unadorned $\otimes$ are
over $k.$ The readers are referred to \cite{d2,kassel} for general
knowledge of (co)quasitriangular Hopf algebras, and to \cite{ass}
for that of quivers and their applications to algebras and
representation theory.

\section{(co)quasitriangular hopf algebras and hopf quivers}

This section is devoted to some preliminaries.

\subsection{}
A quasitriangular Hopf algebra is a pair $(H,R),$ where $H$ is a
Hopf algebra and $R \in H \otimes H$ is an invertible element
satisfying
\begin{gather}
(\D \otimes \id)(R) = R_{13}R_{23}, \\
(\id \otimes \D)(R) = R_{13}R_{12}, \\
\D^o(h)= R \D(h) R^{-1}, \forall \ h \in H,
\end{gather}
where $\D^o$ is the opposite coproduct of $H.$ Writing $R = \sum
R_{(1)} \otimes R_{(2)},$ the notation used is \[ R_{ij} = \sum 1
\otimes \cdots \otimes R_{(1)} \otimes 1 \otimes \cdots \otimes
R_{(2)} \otimes \cdots \otimes 1, \] the element of $H \otimes H
\otimes \cdots \otimes H$ which is $R$ in the $i$th and $j$th
factors. We call $R$ a universal $R$-matrix, or a quasitriangular
structure, of $H.$ A quasitriangular Hopf algebra $(H,R)$ is called
triangular, if \begin{equation} RR_{21}=1 \otimes 1. \end{equation}

Dually, a coquasitriangular Hopf algebra $(H,\R)$ is a Hopf algebra
$H$ and a convolution-invertible linear function $\R: H \otimes H
\To k$ such that
\begin{gather}
\R(ab , c) = \R(a , c_1) \R(b , c_2), \\
\R(a , bc) = \R(a_1 , c) \R(a_2 , b), \\
b_1a_1\R(a_2 , b_2) = \R(a_1 , b_1)a_2b_2
\end{gather}
for all $a,b,c \in H.$ Here and below we use the Sweedler sigma
notation $\D(a)=a_1 \otimes a_2$ for the coproduct. The function
$\R$ is called a universal $R$-form, or a coquasitriangular
structure, of $H.$ Let $\bar{\R}$ denote the convolution-inverse of
$\R.$ Then the last condition (2.7) is equivalent to
\begin{equation}
ba = \R(a_1 , b_1) a_2b_2 \bar{\R}(a_3 , b_3)
\end{equation}
for all $a,b \in H.$ A coquasitriangular Hopf algebra $(H,\R)$ is
called cotriangular, if \begin{equation} \R(a_1 , b_1)\R(b_2 ,a_2) =
\e(a)\e(b) \end{equation} for all $a,b \in H.$

\subsection{}
A quiver is a quadruple $Q=(Q_0,Q_1,s,t),$ where $Q_0$ is the set of
vertices, $Q_1$ is the set of arrows, and $s,t:\ Q_1 \longrightarrow
Q_0$ are two maps assigning respectively the source and the target
for each arrow. A path of length $l \ge 1$ in the quiver $Q$ is a
finitely ordered sequence of $l$ arrows $a_l \cdots a_1$ such that
$s(a_{i+1})=t(a_i)$ for $1 \le i \le l-1.$ By convention a vertex is
said to be a trivial path of length $0.$

The path coalgebra $kQ$ is the $k$-space spanned by the paths of $Q$
with counit and comultiplication maps defined by $\e(g)=1, \ \D(g)=g
\otimes g$ for each $g \in Q_0,$ and for each nontrivial path $p=a_n
\cdots a_1, \ \e(p)=0,$
\begin{equation}
\D(a_n \cdots a_1)=p \otimes s(a_1) + \sum_{i=1}^{n-1}a_n \cdots
a_{i+1} \otimes a_i \cdots a_1 \nonumber + t(a_n) \otimes p \ .
\end{equation}
The length of paths gives a natural gradation to the path coalgebra.
Let $Q_n$ denote the set of paths of length $n$ in $Q,$ then
$kQ=\oplus_{n \ge 0} kQ_n$ and $\D(kQ_n) \subseteq
\oplus_{n=i+j}kQ_i \otimes kQ_j.$ Clearly $kQ$ is pointed with the
set of group-likes $G(kQ)=Q_0,$ and has the following coradical
filtration $$ kQ_0 \subseteq kQ_0 \oplus kQ_1 \subseteq kQ_0 \oplus
kQ_1 \oplus kQ_2 \subseteq \cdots.$$ Hence $kQ$ is coradically
graded.

\subsection{}
According to cibils and Rosso \cite{cr2}, a quiver $Q$ is said to be
a Hopf quiver if the corresponding path coalgebra $kQ$ admits a
graded Hopf algebra structure. Hopf quivers can be determined by
ramification data of groups. Let $G$ be a group, $\C$ the set of
conjugacy classes. A ramification datum $R$ of the group $G$ is a
formal sum $\sum_{C \in \C}R_CC$ of conjugacy classes with
coefficients in $\mathbb{N}=\{0,1,2,\cdots\}.$ The corresponding
Hopf quiver $Q=Q(G,R)$ is defined as follows: the set of vertices
$Q_0$ is $G,$ and for each $x \in G$ and $c \in C,$ there are $R_C$
arrows going from $x$ to $cx.$ For a given Hopf quiver $Q,$ the set
of graded Hopf structures on $kQ$ is in one-to-one correspondence
with the set of $kQ_0$-Hopf bimodule structures on $kQ_1.$

The graded Hopf structures are obtained from Hopf bimodules via the
quantum shuffle product \cite{rosso}. Suppose that $Q$ is a Hopf
quiver with a necessary $kQ_0$-Hopf bimodule structure on $kQ_1.$
Let $p \in Q_l$ be a path. An $n$-thin split of it is a sequence
$(p_1, \ \cdots, \ p_n)$ of vertices and arrows such that the
concatenation $p_n \cdots p_1$ is exactly $p.$ These $n$-thin splits
are in one-to-one correspondence with the $n$-sequences of $(n-l)$
0's and $l$ 1's. Denote the set of such sequences by $D_l^n.$
Clearly $|D_l^n|={n \choose l}.$ For $d=(d_1, \ \cdots, \ d_n) \in
D_l^n,$ the corresponding $n$-thin split is written as $dp=((dp)_1,
\ \cdots, \ (dp)_n),$ in which $(dp)_i$ is a vertex if $d_i=0$ and
an arrow if $d_i=1.$ Let $\alpha=a_m \cdots a_1$ and $\beta=b_n
\cdots b_1$ be paths of length $m$ and $n$ respectively. Let $d \in
D_m^{m+n}$ and $\bar{d} \in D_n^{m+n}$ the complement sequence which
is obtained from $d$ by replacing each 0 by 1 and each 1 by 0.
Define an element
$$(\alpha \cdot \beta)_d=[(d\alpha)_{m+n}.(\bar{d}\beta)_{m+n}] \cdots
[(d\alpha)_1.(\bar{d}\beta)_1]$$ in $\c Q_{m+n},$ where
$[(d\alpha)_i.(\bar{d}\beta)_i]$ is understood as the action of
$kQ_0$-Hopf bimodule on $kQ_1$ and these terms in different brackets
are put together linearly by concatenation. In terms of these
notations, the formula of the product of $\alpha$ and $\beta$ is
given as follows:
\begin{equation}
\alpha \cdot \beta=\sum_{d \in D_m^{m+n}}(\alpha \cdot \beta)_d \ .
\end{equation}

\subsection{}

Let $H$ be a pointed Hopf algebra. Denote its coradical filtration
by $\{ H_n \}_{n=0}^{\infty}.$ Define \[\operatorname{gr}(H)=H_0
\oplus H_1/H_0 \oplus H_2/H_1 \oplus \cdots \cdots \] as the
corresponding (coradically) graded coalgebra. Then
$\operatorname{gr}(H)$ inherits from $H$ a coradically graded Hopf
algebra structure (see e.g. \cite{mont1}). The procedure from $H$ to
$\gr H$ is generally called degeneration and the converse is called
lifting by Andruskiewitsch and Schneider in \cite{as1}.

Due to Van Oystaeyen and Zhang \cite{voz}, if $H$ is a coradically
graded pointed Hopf algebra, then there exists a unique Hopf quiver
$Q(H)$ such that $H$ can be realized as a large sub Hopf algebra of
a graded Hopf structure on the path coalgebra $kQ(H).$ Here by
``large" we mean $H$ contains the subspace $kQ(H)_0 \oplus kQ(H)_1.$

\section{coquasitriangular hopf quivers}

In this section we determine the Hopf quivers which give rise to
coquasitriangular Hopf algebras. In addition, some classification
results of the coquasitriangular structures on such quiver Hopf
algebras are given.

\subsection{}
Let $Q$ be a quiver. If the path coalgebra $kQ$ admits a graded
coquasitriangular Hopf algebra structure, then we called $Q$ a
coquasitriangular Hopf quiver. First we give a combinatorial
description of such class of Hopf quivers.

In the following, we need to make use of the classification of Hopf
bimodules over a group algebra obtained in \cite{cr1} by Cibils and
Rosso. That is, for a group $G,$ the category of $kG$-Hopf bimodules
is equivalent to the product of the categories of usual module
categories $\prod_{C \in \C} kZ_C\mathrm{-mod},$ where $\C$ is the
set of conjugacy classes and $Z_C$ is the centralizer of one of the
elements in the class $C \in \C.$ In particular, if $G$ is abelian,
then the category of $kG$-Hopf bimodules is equivalent to $\prod_{g
\in G} kG\mathrm{-mod}.$

\begin{proposition}
Let $Q$ be a quiver. Then $Q$ is a coquasitriangular Hopf quiver if
and only if $Q$ is a Hopf quiver of form $Q(G,R)$ where $G$ is an
abelian group and $R$ a ramification datum.
\end{proposition}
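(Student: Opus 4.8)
The plan is to prove the two implications separately; the ``only if'' direction rests on a short computation with group-likes, while the ``if'' direction rests on an explicit construction.

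For the ``only if'' part, suppose $kQ$ carries a graded coquasitriangular Hopf structure with $R$-form $\R$. In particular $kQ$ is a graded Hopf algebra, so $Q$ is a Hopf quiver and hence of the form $Q(G,R)$ for some group $G$ and ramification datum $R$ by the theorem of Cibils and Rosso recalled above; it remains to show that $G$ is abelian. Here I would apply the reformulated axiom (2.8) to a pair of group-likes $g,h \in Q_0 = G$. Since $\D(g)=g\otimes g$ and likewise for $h$, all the Sweedler components collapse and the right-hand side becomes $\R(g,h)\,gh\,\bar\R(g,h)$. Because $\bar\R$ is the convolution inverse of $\R$, evaluating $\R * \bar\R = \e\otimes\e$ on $g\otimes h$ gives $\R(g,h)\bar\R(g,h)=\e(g)\e(h)=1$, so (2.8) reads $hg=gh$. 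As $g,h$ were arbitrary, $G$ is abelian. This direction is essentially immediate.

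For the ``if'' part, I must exhibit, for each abelian $G$ and each ramification datum $R$, at least one graded coquasitriangular Hopf structure on $kQ(G,R)$; the idea is to pick the simplest possible Hopf bimodule and then use the trivial $R$-form. Using the equivalence with $\prod_{g\in G} kG\mathrm{-mod}$ for $kG$-Hopf bimodules, valid since $G$ is abelian, I would assign to the degree-$g$ arrow component the trivial $kG$-module of dimension $R_g$; this is a legitimate object of the product category, and its associated Hopf bimodule realizes exactly $Q(G,R)$. The resulting graded Hopf structure on $kQ$ produced by the quantum shuffle product (2.12) then has trivial braiding on $kQ_1$, so the shuffle product is commutative; combined with the trivial cross action and the commutativity of $G$, the whole algebra $kQ$ is commutative. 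On a commutative Hopf algebra the trivial form $\R=\e\otimes\e$ satisfies (2.5) and (2.6) at once and reduces (2.8) to $ba=ab$, which holds; hence $(kQ,\e\otimes\e)$ is in fact cotriangular.

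I expect the real content to lie in the reverse direction, specifically in confirming that the trivial-module choice does produce a commutative graded Hopf algebra: one must check that the quantum shuffle product (2.12) with trivial braiding is genuinely commutative, via the symmetry $d \leftrightarrow \bar{d}$ between a shuffle and its complement, and that passing through the Cibils--Rosso equivalence with the trivial module reproduces precisely the quiver $Q(G,R)$ with the intended grading on arrows. Once this identification is in place, the verification of the $R$-form axioms for $\e\otimes\e$ is routine. The forward direction presents no difficulty beyond the group-like computation above.
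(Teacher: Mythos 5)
Your proposal is correct and follows essentially the same route as the paper: the ``only if'' direction is the same group-like computation with (2.8) (the paper restricts $\R$ to $kQ_0\otimes kQ_0$ first, but the calculation $hg=\R(g,h)\,gh\,\bar\R(g,h)=gh$ is identical), and the ``if'' direction is the same construction via trivial $kG$-modules under the Cibils--Rosso equivalence, yielding a commutative graded Hopf structure on which $\e\otimes\e$ is a (co)quasitriangular --- indeed cotriangular --- form. Your explicit flagging of the commutativity check for the shuffle product (via the $d\leftrightarrow\bar d$ symmetry) is a detail the paper passes over with ``apparently,'' but it is the same argument.
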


\begin{proof}
Assume that $Q$ is a coquasitriangular Hopf quiver and that
$(kQ,\R)$ is a graded coquasitriangular Hopf algebra. Then in the
first place $Q$ must be a Hopf quiver, say $Q(G,R).$ Let $\R_0$ be
the restriction of $\R$ to $kQ_0 \otimes kQ_0.$ Note that $kQ_0=kG$
is a group algebra and that $(kG,\R_0)$ is a coquasitriangular Hopf
algebra. Therefore by (2.7) we have \[ hg = \R_0(g,h)gh
\bar{\R}_0(g,h) = gh \] for all $g,h \in G.$ This implies that $G$
is an abelian group.

Conversely, assume that $Q$ is the Hopf quiver $Q(G,R)$ of some
abelian group $G$ with respect to a ramification datum $R.$ Then we
can take the $kQ_0$-Hopf bimodule structure on $kQ_1$ which
corresponds to the product of a set of trivial $kG$-modules. This
gives rise to a commutative graded Hopf structure on $kQ.$
Apparently $(kQ, \e \otimes \e)$ is a coquasitriangular Hopf
algebra. This proves that $Q$ is a coquasitriangular Hopf quiver.
\end{proof}

\subsection{}
Let $Q$ be a coquasitriangular Hopf quiver. The proof of the
previous proposition provides only a trivial coquasitriangular Hopf
structure on the path coalgebra $kQ.$ Now we study nontrivial
coquasitriangular structures on $kQ.$ Assume that $Q=Q(G,R).$ We
start with the simplest case. Fix a graded Hopf structure on $kQ$
which is determined by a given $kG$-Hopf bimodule structure on
$kQ_1.$ Assume that $\R$ is a coquasitriangular structure which
concentrates at degree 0, namely, $\R(x,y)=0$ for all homogenous
elements $x,y$ unless they lie in $kQ_0$. The following facts are
straightforward.

\begin{lemma}
Keep the previous assumptions. Then $\R$ is a bicharacter of $G,$
that is, for all $f,g,h \in G,$
\begin{gather}
\R(f , gh)=\R(f , g)\R(f , h), \quad \R(fg, h)=\R(f , h)\R(g , h), \\
\R(f,\unit)=1=\R(\unit,f) \notag
\end{gather}
where $\unit$ is the unit of $G.$ Moreover, in $kQ$ the following
equations hold:
\begin{gather}
\alpha g = \frac{\R(g , t(\alpha))}{\R(g , s(\alpha))}
g \alpha \quad \text{for all $g \in G, \ \alpha \in Q_1,$} \\
\beta \alpha = \frac{\R(t(\alpha) , t(\beta))}{\R(s(\alpha) ,
s(\beta))} \alpha \beta \quad \text{for all $\alpha,\beta \in Q_1.$}
\end{gather}
\end{lemma}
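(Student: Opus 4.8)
The plan is to exploit the fact that $\R$ concentrates at degree $0$, so that all the interesting content lives in how the axioms (2.5)--(2.7) interact with the coradical grading. First I would establish the bicharacter property. Restricting $\R$ to $kQ_0\otimes kQ_0=kG\otimes kG$, axioms (2.5) and (2.6) become exactly the two multiplicativity identities $\R(fg,h)=\R(f,h)\R(g,h)$ and $\R(f,gh)=\R(f,g)\R(f,h)$, using that each group-like $g$ is grouplike ($\D(g)=g\otimes g$) so the Sweedler terms collapse. The normalization $\R(f,\unit)=1=\R(\unit,f)$ follows either from these multiplicativity relations together with convolution-invertibility, or directly by testing the counit axiom; I would also note that on group-likes, convolution-invertibility forces each $\R(f,g)$ to be a nonzero scalar, so $\R$ is genuinely a bicharacter $G\times G\to k^\times$.

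Next I would derive the two commutation relations (3.3) and (3.4) by feeding arrows into the fundamental relation (2.8), namely $ba=\R(a_1,b_1)\,a_2b_2\,\bar{\R}(a_3,b_3)$. For (3.3), take $b=g\in G$ and $a=\alpha\in Q_1$. The coproduct of an arrow is $\D(\alpha)=\alpha\otimes s(\alpha)+t(\alpha)\otimes\alpha$, while $\D(g)=g\otimes g$, so $\D^{(2)}(\alpha)$ has three homogeneous summands and $\D^{(2)}(g)=g\otimes g\otimes g$. Since $\R$ and $\bar{\R}$ vanish off degree $0$, the only surviving terms in $\R(a_1,b_1)\,a_2b_2\,\bar{\R}(a_3,b_3)$ are those in which the first and third tensor slots of $\alpha$ are group-likes, i.e. the slot carrying the arrow must sit in the middle factor $a_2b_2$. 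Tracking which of $s(\alpha),t(\alpha)$ lands in slots one and three then yields $\alpha g=\R(g,t(\alpha))\,g\alpha\,\bar{\R}(g,s(\alpha))$, and converting $\bar{\R}$ to a reciprocal via the bicharacter identity $\R(g,s(\alpha))\bar{\R}(g,s(\alpha))=1$ gives precisely the ratio $\R(g,t(\alpha))/\R(g,s(\alpha))$ in (3.3). For (3.4) I would run the same computation with both $a=\alpha$ and $b=\beta$ arrows: now $\D^{(2)}$ of each arrow is a sum of three terms, and the degree constraint (total arrow-degree on each side must match, with $\R,\bar{\R}$ seeing only degree $0$) singles out the term where $\alpha$ contributes its arrow in the middle slot and $\beta$ likewise. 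The endpoints then contribute $\R(t(\alpha),t(\beta))$ from the first slot and $\bar{\R}(s(\alpha),s(\beta))$ from the third, giving (3.4) after the same reciprocal conversion.

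The step I expect to be the main (though still routine) obstacle is the bookkeeping in this degree-selection: one must carefully expand $(\D\otimes\D)\D$ applied to an arrow, identify exactly which of the several homogeneous components survive once $\R$ and $\bar{\R}$ are forced into $kQ_0$, and check that the surviving arrow indeed reassembles by concatenation into the stated right-hand side rather than some other path. Since the grading is respected by the Hopf structure and arrows are homogeneous of degree $1$, the surviving configurations are uniquely pinned down, but verifying that the scalar coefficients come out as the claimed quotients $\R(g,t(\alpha))/\R(g,s(\alpha))$ and $\R(t(\alpha),t(\beta))/\R(s(\alpha),s(\beta))$ requires care with the source/target conventions and with the order of concatenation. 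Everything else is a direct substitution into the coquasitriangularity axioms, so I would present the bicharacter part tersely and spend the bulk of the argument making the degree-$0$ concentration do the work in the two commutation relations.
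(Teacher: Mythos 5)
Your proof is correct in approach and fills in exactly the verification the paper leaves unproved (the lemma is introduced there only by the remark ``the following facts are straightforward''): restrict (2.5)--(2.6) to group-likes for the bicharacter part, then let the degree-$0$ concentration of $\R$ select the surviving Sweedler terms in (2.7)/(2.8). Two details in your writeup need repair, however. First, your substitution for (3.3) is mislabelled: with $a=\alpha$, $b=g$, identity (2.8) reads $g\alpha=\R(t(\alpha),g)\,\alpha g\,\bar{\R}(s(\alpha),g)$, i.e.
\begin{equation*}
\alpha g=\frac{\R(s(\alpha),g)}{\R(t(\alpha),g)}\,g\alpha,
\end{equation*}
whose $\R$-arguments are in the opposite order from (3.3); since at this stage nothing is known relating $\R(g,h)$ to $\R(h,g)$ (skew-symmetry is only established later, in Theorem 3.3), you cannot pass silently from one order to the other. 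The formula you actually wrote down, $\alpha g=\R(g,t(\alpha))\,g\alpha\,\bar{\R}(g,s(\alpha))$, is the output of the opposite substitution $a=g$, $b=\alpha$; take that substitution instead. (Both identities are in fact true, and comparing them shows $\R(g,s(\alpha))\R(s(\alpha),g)=\R(g,t(\alpha))\R(t(\alpha),g)$, but only the second is (3.3) as stated.) Second, you assert without justification that $\bar{\R}$ also vanishes off degree $0$; this is genuinely used, to kill the terms containing $\bar{\R}(g,\alpha)$ or $\bar{\R}(\alpha_3,\beta_3)$ with an arrow in a slot. It is true: the functional equal to $\R(f,g)^{-1}$ on $kQ_0\otimes kQ_0$ and $0$ in positive degrees is checked, using that $\D$ preserves the grading, to be a two-sided convolution inverse of $\R$, hence equals $\bar{\R}$ by uniqueness of convolution inverses. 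Alternatively you can avoid $\bar{\R}$ altogether by running both computations from (2.7), which gives $\R(g,s(\alpha))\,\alpha g=\R(g,t(\alpha))\,g\alpha$ and $\R(s(\alpha),s(\beta))\,\beta\alpha=\R(t(\alpha),t(\beta))\,\alpha\beta$ directly, and then dividing by the nonzero group-like values. With these two repairs the argument is complete; the bicharacter part and your treatment of (3.4), where the substitution $a=\alpha$, $b=\beta$ does produce exactly the stated formula, are fine as written.
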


Here and below, the product notation ``$\cdot$" is omitted whenever
it is clear according to the context.

The preceding lemma indicates that the coquasitriangular structure
$\R$ and the $kG$-Hopf bimodule structure on $kQ_1$ are mutually
determined.

\subsection{}
Let $Q=Q(G,R)$ be a coquasitriangular Hopf quiver. Note that $Q$ is
connected if and only if the union of the conjugacy classes with
non-zero coefficients in the ramification datum $R$ generates $G.$
In general, let $Q(\unit)$ denote the connected component containing
the unit $\unit \in G,$ then $Q(\unit)_0$ is a normal subgroup of
$G$ and as a quiver $Q$ is equivalent to $\cup_{x \in G/Q(\unit)_0}
Q(x)$ where $Q(x)$ is identical to $Q(\unit)$ for all $x.$
Similarly, coquasitriangular Hopf structures on $kQ$ can also be
decomposed as a crossed product (see e.g. \cite{mont1}) of
$kQ(\unit),$ which inherits a sub coquasitriangular structure of
$kQ,$ with the quotient group algebra $kG/Q(\unit)_0.$ This enables
us to work on only connected Hopf quivers.

Now we are ready to state the main result of this section.

\begin{theorem}
Let $Q=Q(G,R)$ be a connected coquasitriangular Hopf quiver. Then
the complete list of coquasitriangular graded Hopf algebra
structures $(kQ,\R)$ with $Q_0$ equal to $G$ and $\R$ concentrating
at degree 0 is in one-to-one correspondence with the set of
skew-symmetric bicharacters of $G.$ Here by ``skew-symmetric" it is
meant $ \R(g , h)\R(h , g)=1 $ for all $g,h \in G.$
\end{theorem}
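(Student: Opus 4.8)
The plan is to establish a bijection between coquasitriangular graded Hopf structures $(kQ,\R)$ (with $\R$ concentrated at degree $0$) and skew-symmetric bicharacters of $G$, by exploiting the correspondence already set up in the previous lemma between such an $\R$ and the $kG$-Hopf bimodule structure on $kQ_1$. The starting point is the observation, recorded just after Lemma~3.2, that $\R$ and the Hopf bimodule structure are mutually determined. So the map in one direction is clear: given a coquasitriangular graded Hopf algebra structure, restrict $\R$ to $kQ_0 \otimes kQ_0 = kG \otimes kG$ to obtain a bicharacter by Lemma~3.2, and then verify that this bicharacter must be skew-symmetric.

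Let me think about why skew-symmetry is forced. The cotriangularity-like constraint does not hold in general, but the relation $(2.8)$ together with the degree-$0$ concentration should yield that for group-likes $ba = \R(g,h)gh\bar\R(g,h)$ in degree $0$, which is consistent for any bicharacter; the genuine constraint comes from compatibility with the \emph{graded} structure in higher degrees. The plan is to use equation $(3.4)$ of the lemma, $\beta\alpha = \frac{\R(t(\alpha),t(\beta))}{\R(s(\alpha),s(\beta))}\alpha\beta$, and to impose that the product on $kQ$ really is well-defined and associative as dictated by the quantum shuffle formula $(2.12)$. Swapping the roles of $\alpha$ and $\beta$ gives two expressions that must be consistent, and comparing the scalar factors will produce the condition $\R(g,h)\R(h,g)=1$. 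I would carry this out on arrows lying over a single conjugacy-class generator, where the source and target of $\alpha,\beta$ differ by the generating group elements, so the ratio collapses to exactly $\R(g,h)\R(h,g)$.

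Conversely, given a skew-symmetric bicharacter $\R$ of $G$, the plan is to construct the $kG$-Hopf bimodule structure on $kQ_1$ via the formulas $(3.3)$ and $(3.4)$ read backwards: declare the left and right actions of $G$ on arrows by the scalar $\R(g,t(\alpha))/\R(g,s(\alpha))$, extend $\R$ to all of $kQ$ by setting it to vanish off degree $0$, and then check that this data satisfies the Hopf bimodule axioms and that the resulting $\R$ verifies $(2.5)$--$(2.7)$ on the whole path coalgebra. Here skew-symmetry is exactly what is needed to make the bimodule structure and the coquasitriangular axioms mutually consistent across the shuffle product. Finally I would confirm the two constructions are mutually inverse, which is immediate since both directions are governed by the same scalar assignment.

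\textbf{The main obstacle} I anticipate is the converse direction: verifying that the candidate $\R$ defined to be $0$ off degree $0$ genuinely satisfies the multiplicativity axioms $(2.5)$ and $(2.6)$ against the quantum shuffle product $(2.12)$ on elements of positive length. The subtlety is that $(2.5)$--$(2.6)$ relate $\R$ evaluated on products to sums of products of $\R$-values, and when one factor has positive degree the quantum shuffle expansion produces many terms; one must check that all the terms requiring positive-degree evaluations of $\R$ indeed cancel or vanish, so that concentration at degree $0$ is consistent. This is where skew-symmetry must be deployed carefully rather than just the bicharacter property, and it is the step I would treat with the most care, likely by reducing to the generating arrows and using induction on path length via the graded structure.
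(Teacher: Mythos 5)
Your overall architecture coincides with the paper's: restrict $\R$ to $kQ_0\otimes kQ_0$ to get a bicharacter of $G$, force skew-symmetry from the multiplicative structure of $kQ$, and conversely build the $kG$-Hopf bimodule out of a skew-symmetric bicharacter (the one-dimensional action $h\triangleright\alpha=\R(g,h)\alpha$ on arrows $\alpha:\unit\To g$, extended via Cibils--Rosso), extend $\R$ by zero off degree $0$, and verify the axioms. However, two of your concrete steps are off. In the forward direction, your ``swap the commutation relation (3.3) twice'' argument yields $\R(g,h)\R(h,g)=1$ only when the product $\beta\alpha$ is \emph{nonzero}. For the diagonal case $\alpha=\beta$ --- which you need, since skew-symmetry of the bicharacter requires $\R(g,g)^2=1$ for each generator $g$, and a generator may carry a single arrow --- the swap gives only $\alpha^2=\R(g,g)\alpha^2$, i.e.\ $\alpha^2=0$ or $\R(g,g)=1$; and $\alpha^2=0$ really happens, precisely when $\R(g,g)=-1$, which is the interesting case (it produces the Taft algebras of Section 5.1). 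To close this you must expand $\alpha^2$ by the shuffle formula (2.10), getting $\alpha^2=(1+\R(g,g))[g\alpha][\alpha]$, and compare coefficients of basis paths; that is exactly the paper's computation, and the swap shortcut alone fails here.

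Second, you have located the ``main obstacle'' of the converse in the wrong place. For the trivially extended $\R$, axioms (2.5)--(2.6) are immediate: the shuffle product and the coproduct are both graded, so whenever any argument is homogeneous of positive degree, \emph{every} term on both sides vanishes individually --- there is no cancellation to arrange and no use of skew-symmetry. The genuine work, and the only place skew-symmetry is indispensable, is axiom (2.7)/(2.8). The paper proves it in stages: first for an arrow against a group-like (reduced by translation to arrows with source $\unit$, where it is the definition of the conjugation action); then for arbitrary paths by expanding $\beta\alpha$ with the shuffle formula, commuting each bracket of the form [vertex$\cdot$arrow] past [arrow$\cdot$vertex] --- skew-symmetry is what makes the commutation scalar $\R(t(\cdot),t(\cdot))/\R(s(\cdot),s(\cdot))$ the same regardless of which factor is the arrow --- and then telescoping the product of ratios using $t((d\beta)_i)=s((d\beta)_{i+1})$. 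Your plan, which proposes to spend its care on (2.5)--(2.6), contains no counterpart of this case reduction and telescoping computation, which is the bulk of the paper's proof.
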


\begin{proof}
First assume that $(kQ,\R)$ is a coquasitriangular Hopf algebra with
$Q_0$ equal to $G$ and $\R$ concentrating at degree 0. Then by
restricting to $G,$ we have showed in Lemma 3.2 that $\R$ is a
bicharacter of $G$ satisfying (3.2)-(3.3). Assume that $R=\sum_{g
\in G} R_gg.$ Since $Q(G,R)$ is connected, we have $<g \in G | R_g
\ne 0>=G.$ Suppose that $R_g \ne 0 \ne R_h,$ then in $Q$ there are
arrows starting from the unit $\unit$ of $G,$ say $\alpha: \unit \To
g$ and $\beta: \unit \To h.$ Then by (2.10) and (3.2)-(3.3) we have
\begin{eqnarray*}
\beta \alpha &=& [\beta g][\alpha] + [h \alpha][\beta] = \R(g
, h)[g \beta][\alpha] + [h \alpha][\beta] \\
&=& \R(g , h)\alpha \beta = \R(g , h)[\alpha h][\beta] + \R(g , h)[g \beta][\alpha] \\
&=& \R(g , h)\R(h , g)[h \alpha][\beta] + \R(g , h)[g \beta][\alpha]. \\
\end{eqnarray*}
This implies that $\R(g , h)\R(h , g)=1.$ Since both $g$ and $h$
vary a generating set of $G,$ therefore we have $\R(a , b)\R(b ,
a)=1$ for all $a,b \in G.$

Conversely, assume that $\R$ is a skew-symmetric bicharacter of $G.$
For each arrow in $Q$ with source $\unit,$ say $\alpha: \unit \To
g,$ let $h \triangleright \alpha := \R(g , h)\alpha.$ This defines a
one-dimensional left $G$-module on $k\alpha.$ Denote by $V$ the
vector space spanned by all arrows with source $\epsilon$ and
understand a $G$-module structure on it by defining $G$-action on
each arrow as above. Now by the aforementioned results of Cibils and
Rosso \cite{cr1,cr2}, we can extend the left $G$-module $V$ to a
$kG$-Hopf bimodule on $kQ_1$ and this gives rise to a unique graded
Hopf algebra structure on the path coalgebra $kQ.$ We extend $\R$
trivially to be a function on $kQ \otimes kQ$ such that $\R(x ,
y)=0$ whenever one of the homogeneous elements $x,y$ lies out of
$kQ_0.$ Now we claim that $(kQ,\R)$ is a coquasitriangular Hopf
algebra. The axioms (2.5)-(2.6) are direct from the definition of
$\R.$ It remains to verify (2.8). We need to show that the following
equation holds
\[ \beta \alpha = \R(t(a_m) , t(b_n)) \alpha \beta \bar{\R}(s(a_1) , s(b_1)) \]
for all paths $\alpha=a_m \cdots a_1,\ \beta=b_n \cdots b_1.$ Here
we use the convention: if $m=0,$ then $\alpha \in Q_0$ and
$t(\alpha)=\alpha=s(\alpha).$ For $\alpha, \beta \in Q_0,$ the
equation is obvious. For $\alpha \in Q_1$ an arrow with source
$\epsilon$ and $\beta \in Q_0,$ the equation follows from the
definition of the action $``h \triangleright \alpha"$ which
corresponds to conjugation $``h \alpha h^{-1}"$ in the $kG$-Hopf
bimodule. Now let $\alpha: g \To h$ be an arbitrary arrow and $f \in
G.$ Note that $g^{-1}\alpha$ is an arrow from $\unit$ to $g^{-1}h.$
Denote it by $\tilde{\alpha}.$ Then we have by (3.1) and the
previous case that
\[ f \alpha = gf \tilde{\alpha} = g \R(g^{-1}h , f)
\tilde{\alpha} f = \R(h , f) \alpha f \bar{\R}(g , f).\] Finally let
$\alpha=a_m \cdots a_1,\ \beta=b_n \cdots b_1$ be arbitrary paths.
Then we have by the preceding cases and the product formula (2.10)
that
\begin{eqnarray*}
 \beta \alpha &=& \sum_{d \in D_n^{m+n}} [(d\beta)_{m+n} (\bar{d}\alpha)_{m+n}]
 \cdots [(d\beta)_1 (\bar{d}\alpha)_1] \\
 &=& \sum_{d \in D_n^{m+n}} [\frac{\R(t((\bar{d}\alpha)_{m+n}) ,
 t((d\beta)_{m+n}))}{\R(s((\bar{d}\alpha)_{m+n}) ,
 s((d\beta)_{m+n}))} (\bar{d}\alpha)_{m+n} (d\beta)_{m+n}] \cdots \\
 & & [\frac{\R(t((\bar{d}\alpha)_1) , t((d\beta)_1))}{R(s((\bar{d}\alpha)_1)
 , s((d\beta)_1))} (\bar{d}\alpha)_1 (d\beta)_1] \\
 &=& \frac{\R(t(a_m) , t(b_n))}{\R(s(a_1) , s(b_1))} \sum_{d \in D_n^{m+n}}
 [(\bar{d}\alpha)_{m+n} (d\beta)_{m+n}] \cdots [(\bar{d}\alpha)_1 (d\beta)_1] \\
 &=& \frac{\R(t(a_m) , t(b_n))}{\R(s(a_1) , s(b_1))} \sum_{d \in D_m^{m+n}}
 [(d\alpha)_{m+n} (\bar{d}\beta)_{m+n}] \cdots [(d\alpha)_1 (\bar{d}\beta)_1] \\
 &=& \R(t(a_m) , t(b_n)) \alpha \beta \bar{\R}(s(a_1) , s(b_1)).
\end{eqnarray*} In the third equality we have used the fact
$t((d\beta)_i)=s((d\beta)_{i+1})$ for $i=1, \cdots, m+n-1.$ Now we
are done.
\end{proof}

\subsection{}
In the following we show that the situation ``$\R$ concentrates at
degree 0" can always be realized as restricting a general
coquasitriangular structure. Let $(kQ,\R)$ be a coquasitriangular
Hopf algebra, define $\R_{0}$ by $\R_{0}|_{kQ_{0}\otimes
kQ_{0}}:=\R|_{kQ_{0}\otimes kQ_{0}}$ and $\R_{0}|_{kQ_{i}\otimes
kQ_{j}}:=0$ if $i+j> 0$. Clearly, $\R _{0}$ concentrates at degree
0.

\begin{proposition}
Let $(kQ,\R)$ be a coquasitriangular Hopf algebra, then  $(kQ, \R
_{0})$ is coquasitriangular too.
\end{proposition}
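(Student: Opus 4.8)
The plan is to realize $\R_{0}$ as the pullback of $\R$ along the canonical graded projection, reduce the two multiplicativity axioms and the invertibility to formalities, and then use the grading of $kQ$ to settle the commutation axiom, which is the only genuine point. Since $kQ=\oplus_{n\ge 0}kQ_n$ is a graded Hopf algebra (with $kQ_i\cdot kQ_j\subseteq kQ_{i+j}$ and $\D(kQ_n)\subseteq\oplus_{i+j=n}kQ_i\otimes kQ_j$), the linear projection $\pi\colon kQ\to kQ_0$ that annihilates every component of positive degree is simultaneously an algebra and a coalgebra map, hence a Hopf algebra map retracting the inclusion $kQ_0\hookrightarrow kQ$. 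Because $kQ_i\otimes kQ_j$ meets $kQ_0\otimes kQ_0$ only when $i=j=0$, the definition of $\R_{0}$ says precisely that $\R_{0}=\R\circ(\pi\otimes\pi)$.

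Granting this, the axioms (2.5)--(2.6) for $\R_{0}$ are immediate: applying (2.5)--(2.6) for $\R$ to $\pi(a),\pi(b),\pi(c)$ and using that $\pi$ is a coalgebra map (so $\pi(c)_1\otimes\pi(c)_2=\pi(c_1)\otimes\pi(c_2)$) yields them at once. For convolution-invertibility I would put $\bar{\R}_0:=\bar{\R}\circ(\pi\otimes\pi)$ and compute $\R_{0}*\bar{\R}_0$; expanding the convolution on $kQ\otimes kQ$ and again using that $\pi$ is a coalgebra map collapses it to $(\R*\bar{\R})(\pi a,\pi b)=\e(\pi a)\e(\pi b)=\e(a)\e(b)$, and symmetrically for $\bar{\R}_0*\R_{0}$. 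Thus $\bar{\R}_0$ is the convolution inverse of $\R_{0}$, and it too is supported on $kQ_0\otimes kQ_0$.

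The substantive step is the commutation axiom (2.7), which I would verify in the equivalent form (2.8). The naive pullback argument does not apply here, since (2.8) is an identity inside $kQ$ rather than inside $kQ_0$ and so is not automatically transported by the Hopf map $\pi$; instead I would exploit homogeneity. It suffices to treat homogeneous $a\in kQ_m$ and $b\in kQ_n$, for which $ba\in kQ_{m+n}$. Reading (2.8) for $\R$, namely $ba=\R(a_1,b_1)\,a_2b_2\,\bar{\R}(a_3,b_3)$, each summand is a scalar multiple of the homogeneous element $a_2b_2$ of degree $\deg a_2+\deg b_2$. Projecting onto the degree-$(m+n)$ component (the left side already lives there) retains only summands with $\deg a_2+\deg b_2=m+n$, which by $\deg a_1+\deg a_2+\deg a_3=m$ and $\deg b_1+\deg b_2+\deg b_3=n$ forces $\deg a_1=\deg a_3=\deg b_1=\deg b_3=0$. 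On exactly these summands $\R$ and $\bar{\R}$ agree with $\R_{0}$ and $\bar{\R}_0$, whereas every remaining summand is killed by $\R_{0}$ and $\bar{\R}_0$; hence $ba=\R_{0}(a_1,b_1)\,a_2b_2\,\bar{\R}_0(a_3,b_3)$, which is (2.8) for $\R_{0}$. By bilinearity this holds for all $a,b\in kQ$, and together with the previous paragraph it shows that $(kQ,\R_{0})$ is coquasitriangular.

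I expect the main obstacle to lie exactly in this last step: one must recognize that functoriality of coquasitriangular structures under Hopf maps fails for the commutation axiom, so that the grading has to be brought in, and that it is precisely the homogeneity of $ba$ that forces the Sweedler components $a_1,a_3,b_1,b_3$ into degree $0$ -- the support of $\R_{0}$.
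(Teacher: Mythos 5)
Your proof is correct and its substantive step is essentially the paper's own argument: the paper also verifies the commutation axiom by extracting the degree-$(m+n)$ component of the identity for $\R$ applied to paths of lengths $m$ and $n$ (its $L_{m+n}(a,b)=R_{m+n}(a,b)$), which forces the outer Sweedler legs into degree $0$ where $\R$ and $\R_0$ agree. The only differences are cosmetic: the paper works with (2.7) directly rather than (2.8), and dismisses (2.5)--(2.6) and invertibility as obvious, whereas you formalize them via the graded Hopf projection $\pi$ and the explicit inverse $\bar{\R}\circ(\pi\otimes\pi)$.
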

\begin{proof}  We only show
equation (2.7) for $\R_{0}$ since (2.5) and (2.6) are obvious. Let
$a, b$ be two paths in $kQ$ of lengths $m,n$ respectively, then by
$(kQ,\R)$ is coquasitriangular we have
$$b_1a_1\R(a_2 , b_2) = \R(a_1 , b_1)a_2b_2.$$
Denote the linear combination of all paths of length $m+n$ appearing
in the left (resp. right) side of the above equality by
$L_{m+n}(a,b)$ (resp. $R_{m+n}(a,b)$). Therefore,
$$L_{m+n}(a,b)=R_{m+n}(a,b).$$
Note that  $L_{m+n}(a,b)=b_1 a_1 \R _{0}(a_2 , b_2)$ and
$R_{m+n}(a,b)=\R_{0}(a_1 , b_1)a_2 b_2$, we get what we want.
\end{proof}

\subsection{}
Now the following consequences are immediate.

\begin{corollary}
Let $Q=Q(G,R)$ be a coquasitriangular Hopf quiver and $(kQ,\R)$ a
graded coquasitriangular Hopf algebra.
\begin{enumerate}
  \item If $Q$ is connected, then $(kQ, \R _{0})$ is a cotriangular Hopf algebra.
  \item Denote by $\R(\unit)$ the restriction of $\R$ on the connected component $Q(\unit)$ of
$Q$ containing the unit element, then the sub coquasitriangular Hopf
algebra $(kQ(\unit),\R _{0}(\unit))$ is cotriangular.
\end{enumerate}
\end{corollary}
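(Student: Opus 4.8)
The plan is to reduce everything to the degree-zero structure and then invoke the classification of Theorem 3.3. For part (1), I would first apply Proposition 3.4 to the connected graded coquasitriangular Hopf algebra $(kQ,\R)$, which yields that $(kQ,\R_0)$ is again coquasitriangular. Since $\R_0$ concentrates at degree $0$ by construction and $Q=Q(G,R)$ is connected with $Q_0=G$, Theorem 3.3 applies and tells us that $\R_0$ corresponds to a skew-symmetric bicharacter of $G$; in particular $\R_0(g,h)\R_0(h,g)=1$ for all $g,h\in G$.

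It then remains to verify the cotriangularity identity (2.9), namely $\R_0(a_1,b_1)\R_0(b_2,a_2)=\e(a)\e(b)$, for arbitrary homogeneous $a,b\in kQ$. Here the key observation is a grading argument: since $\R_0$ vanishes on $kQ_i\otimes kQ_j$ whenever $i+j>0$, a summand $\R_0(a_1,b_1)\R_0(b_2,a_2)$ can be nonzero only if all four components $a_1,a_2,b_1,b_2$ lie in $kQ_0$. Because the coproduct is graded, the degree of $a_1$ plus that of $a_2$ equals the length of $a$, and likewise for $b$, so this forces both $a$ and $b$ to have length $0$, i.e. $a=g$ and $b=h$ are group-likes. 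For such elements $\D(g)=g\otimes g$ and $\D(h)=h\otimes h$, whence the left-hand side collapses to $\R_0(g,h)\R_0(h,g)=1=\e(g)\e(h)$; for all other homogeneous $a,b$ both sides vanish. This establishes (2.9) and hence the cotriangularity of $(kQ,\R_0)$.

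For part (2), I would simply transport part (1) to the connected component $Q(\unit)$. As recorded in the discussion preceding Theorem 3.3, $Q(\unit)$ is itself a connected coquasitriangular Hopf quiver whose vertex group is the normal subgroup $Q(\unit)_0$, and $kQ(\unit)$ inherits the sub coquasitriangular structure $\R(\unit)$ obtained by restricting $\R$. Applying part (1) to the connected quiver Hopf algebra $(kQ(\unit),\R(\unit))$ then yields directly that $(kQ(\unit),\R_0(\unit))$ is cotriangular.

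I expect the only real subtlety to be the degree bookkeeping in part (1): one must argue cleanly that the support condition on $\R_0$ together with the gradedness of $\D$ leaves only the group-like diagonal, so that skew-symmetry and the condition $\R_0(g,h)\R_0(h,g)=1$ furnished by Theorem 3.3 are exactly what is needed. Once this reduction is in place, both statements are immediate, which is consistent with the corollary being presented as an immediate consequence.
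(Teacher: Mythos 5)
Your proposal is correct and follows exactly the route the paper intends: the paper states this corollary as ``immediate'' from Proposition 3.4 (passing to $\R_0$), Theorem 3.3 (skew-symmetry of the degree-$0$ bicharacter on a connected quiver), and the Subsection 3.3 discussion of the component $Q(\unit)$, which is precisely the chain you assemble. Your grading argument verifying (2.9) --- that $\e$ and $\R_0$ both kill positive-degree components, so only the group-like diagonal survives --- is the bookkeeping the paper leaves implicit, and it is carried out correctly.
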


Here, it is worthy to note that for graded coquasitriangular Hopf
algebras $(kQ,\R)$ as above the ``connectedness" of $Q$ leads to the
``cotriangularity" of $kQ.$ Accordingly, a general graded
coquasitriangular Hopf algebra $(kQ,\R)$ is a crossed product of a
cotriangular one with a group algebra. Finally, we remark that there
may be many coquasitriangular structures of $kQ$ not concentrating
at degree 0. A complete classification is not known yet.

\section{quiver setting for coquasitriangular pointed hopf algebras}
The aim of this section is to provide a quiver setting for general
coquasitriangular pointed Hopf algebras. In particular, a Gabriel
type theorem is obtained.

\subsection{}
Assume that $(H,\R)$ is a coquasitriangular pointed Hopf algebra.
Let $\gr(H)$ be its coradically graded version as mentioned in
Subsection 2.4 and let $\gr(\R): \gr(H) \otimes \gr(H) \To k$ be the
function defined by (for homogeneous elements $g,h\in \gr(H)$) \[
\gr(\R)(g,h) = \left\{
                                        \begin{array}{ll}
                                          \R(g,h), & \hbox{if $g,h \in H_0$;} \\
                                          0, & \hbox{otherwise.}
                                        \end{array}
                                      \right. \]
Our first observation is that the degeneration $(\gr(H), \gr(\R))$
of $(H, \R)$ does not lose the coquasitriangularity.

\begin{lemma}
Assume that $(H,\R)$ is a coquasitriangular pointed Hopf algebra,
then $(\gr(H),\gr(\R))$ is still a coquasitriangular pointed Hopf
algebra.
\end{lemma}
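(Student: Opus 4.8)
The plan is to verify directly that $\gr(\R)$ satisfies the axioms (2.5), (2.6) and (2.7) of a coquasitriangular structure on the Hopf algebra $\gr(H)$, together with convolution-invertibility, exploiting the fact that $\gr(\R)$ is concentrated in degree $0$. Recall from Subsection 2.4 that $\gr(H)$ is a genuine coradically graded Hopf algebra, and note that its degree-$0$ component $\gr(H)_0$ coincides, as a Hopf algebra, with the coradical $H_0 = kG$, where $G = G(H)$. By construction $\gr(\R)$ vanishes on $\gr(H)_i \otimes \gr(H)_j$ whenever $i+j>0$ and agrees with $\R$ on $kG \otimes kG$.

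The multiplicativity axioms (2.5) and (2.6), together with convolution-invertibility, are the routine part. Since every factor of the form $\gr(\R)(x,y)$ forces both $x$ and $y$ into degree $0$, each of these identities collapses onto $\gr(H)_0 \otimes \gr(H)_0 = kG \otimes kG$, where it reduces to the corresponding identity for the restriction $\R|_{kG \otimes kG}$; the latter holds because $(H,\R)$ is coquasitriangular. For invertibility I would take the degree-$0$ truncation of $\bar{\R}$ and check, by the same degree bookkeeping, that it is a two-sided convolution inverse of $\gr(\R)$.

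The substantive step is axiom (2.7), and here I would mimic the proof of Proposition 3.5. Given homogeneous $\bar a \in \gr(H)_m$ and $\bar b \in \gr(H)_n$, choose lifts $a \in H_m$, $b \in H_n$ and start from the identity $b_1 a_1 \R(a_2 , b_2) = \R(a_1 , b_1) a_2 b_2$ valid in $H$. Because the coradical filtration is simultaneously an algebra filtration ($H_i H_j \subseteq H_{i+j}$) and a coalgebra filtration ($\D(H_l) \subseteq \sum_{i+j=l} H_i \otimes H_j$), both sides actually lie in $H_{m+n}$. Applying the projection $\pi_{m+n} \colon H_{m+n} \To \gr(H)_{m+n}$ then kills every summand of filtration degree $< m+n$; on the left this retains exactly the terms with $a_1 \in H_m$, $a_2 \in H_0$, $b_1 \in H_n$, $b_2 \in H_0$, and symmetrically on the right the terms with $a_1, b_1 \in H_0$. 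On precisely these surviving terms $\R$ coincides with $\gr(\R)$, while $\pi_{m+n}$ turns the products $b_1 a_1$, $a_2 b_2$ into the products of $\gr(H)$ and the truncated coproducts into the graded coproduct of $\gr(H)$. Reading off the two sides, the projected identity is exactly axiom (2.7) for $\gr(\R)$ on $\bar a, \bar b$, and bilinearity extends it to all of $\gr(H)$.

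The main obstacle I anticipate is the degree bookkeeping in this last step: one must check carefully that taking the top filtration component commutes with the passage to $\gr$, so that the surviving $(m,0)$- and $(0,n)$-components of $\D(a)$ and $\D(b)$ become the appropriate homogeneous components of the graded coproducts, and that no lower-degree contamination coming from values of $\R$ on arguments outside $kG$ can survive into degree $m+n$. Both points hinge entirely on the two filtration compatibilities above, so once these are invoked the identification is forced.
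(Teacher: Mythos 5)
Your proof is correct, and its core idea is the same as the paper's: axioms (2.5)--(2.6) collapse onto the coradical $H_0=kG$, and axiom (2.7) for $\gr(\R)$ is obtained as the top filtration-degree part of the identity $b_1a_1\R(a_2,b_2)=\R(a_1,b_1)a_2b_2$ in $H$. Where you genuinely differ is in how that top-degree part is extracted. The paper first invokes two Gabriel-type theorems --- $\gr(H)$ embeds as a graded sub Hopf algebra of a path coalgebra $kQ$ by \cite{voz}, and $H$ embeds as a subcoalgebra of the same $kQ$ by \cite{chz} --- so that elements become linear combinations of paths and the ``component of length $m+n$'' (its $L_{m+n}$, $R_{m+n}$) has a literal meaning; it then reduces to the case where $a,b$ are single paths, exactly as in Proposition 3.4 (which, incidentally, is the result you mean when you write ``Proposition 3.5''). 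You instead work intrinsically: since the coradical filtration of a pointed Hopf algebra is simultaneously an algebra filtration ($H_iH_j\subseteq H_{i+j}$, because $H_0$ is a subalgebra) and a coalgebra filtration, you may apply the projection $\pi_{m+n}\colon H_{m+n}\to\gr(H)_{m+n}$ to the identity and observe that only the $(m,0)$- and $(0,n)$-components of the coproducts survive. Your route buys self-containedness (it does not presuppose the quiver embedding, which in the paper's logical order is only established in Theorem 4.2, after this lemma), it treats arbitrary elements of $H_m$ rather than single paths, and it also settles convolution-invertibility of $\gr(\R)$, a point the paper leaves implicit; the paper's route buys concreteness and uniformity with its quiver-theoretic theme, since the length decomposition inside $kQ$ makes the bookkeeping visible. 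The one point you must make explicit --- you flag it yourself --- is that although the bidegree decomposition of $\D(a)$ in the filtered coalgebra $H$ is not unique, its image under $\pi_i\otimes\pi_{m-i}$ is well defined and is by definition the graded coproduct of $\gr(H)$; this standard fact about associated graded coalgebras is precisely what legitimizes your identification of the surviving terms, and with it your argument goes through.
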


\begin{proof} Denote the multiplication notation in $\gr(H)$ by $\circ$.
At first, we check the equation (2.5).
Take any three homogenous elements $a,b,c\in \gr(H)$. If $a\not\in
H_{0}$ or $b\not\in H_{0}$, then
$$\gr(\R)(a\circ b,c)=0=\gr(\R)(a,c_{1})\gr(\R)(a,c_{2})$$
by the definition of $\gr(\R)$. If $a,b\in H_{0}$ and $c\not\in
H_{0}$, then clearly either $c_{1}\not\in H_{0}$ or $c_{2}\not\in
H_{0}$. Also, by the definition of $\gr(\R)$, equation (2.5) is
right since both sides are zero. We only need to prove the case
$a,b,c\in H_{0}$, but this is clear since $H_{0}$ is a sub Hopf
algebra of $H$ and hence coquasitriangular. The equation (2.6) can
be proved similarly.

Next let us show the equation (2.7). By the Gabriel type theorem for
graded pointed Hopf algebras \cite{voz} (Theorem 4.5) and pointed
coalgebras \cite{chz} (Theorem 2.1), there exists a unique Hopf
quiver $Q$ such that $\gr(H)$ is a sub Hopf algebra of $kQ$ and $H$
is sub coalgebra of $kQ$. By this, we can consider the elements of
$\gr(H)$ and $H$ as linear combinations of paths. Take $a,b\in H$
and for simplicity we may assume that $a,b$ are paths of length
$m,n$ respectively. Since $H$ is coquasitriangular, one has
$$b_1a_1\R(a_2 , b_2) = \R(a_1 , b_1)a_2b_2.$$
Similar to the proof of Proposition 3.4, we denote the linear
combination of all paths of length $m+n$ appearing in the left
(resp. right) side of above equality by $L_{m+n}(a,b)$ (resp.
$R_{m+n}(a,b)$). So,
$$L_{m+n}(a,b)=R_{m+n}(a,b).$$
Meanwhile, it is not hard to see that we always have
\[ L_{m+n}(a,b)=b_1\circ a_1\gr(\R)(a_2 , b_2), \quad R_{m+n}(a,b)=\gr(\R)(a_1 , b_1)a_2\circ
b_2. \] Now the proof is finished.
\end{proof}

\subsection{}
Our next observation is that, the Gabriel type theorem for pointed
Hopf algebras \cite{voz} can be restricted to the coquasitriangular
situation.

\begin{theorem}
Suppose that $(H,\R)$ is a coquasitriangular pointed Hopf algebra.
Then there exists a unique coquasitriangular Hopf quiver $Q$ such
that $(\gr(H),\gr(\R))$ is isomorphic to a large sub
coquasitriangular Hopf structure of $(kQ,\mathfrak{R}).$
\end{theorem}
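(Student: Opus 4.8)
The plan is to degenerate to the graded case and then feed the result into the machinery of Section 3. First I would invoke Lemma 4.1 to replace $(H,\R)$ by $(\gr(H),\gr(\R))$, which is again a coquasitriangular pointed Hopf algebra and is now coradically graded. As such it falls under the Gabriel type theorem of Van Oystaeyen and Zhang \cite{voz} recalled in Subsection 2.4, which yields a \emph{unique} Hopf quiver $Q$ together with a graded Hopf structure on $kQ$ realizing $\gr(H)$ as a large sub Hopf algebra, so that $\gr(H) \supseteq kQ_0 \oplus kQ_1$. This already produces the quiver and establishes its uniqueness; it remains to identify $Q$ as coquasitriangular and to supply the structure $\mathfrak{R}$.

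For the first point I would look at degree $0$. Since $kQ_0 = \gr(H)_0 = kG$ is a group algebra and a sub Hopf algebra of the coquasitriangular $\gr(H)$, the restriction $\gr(\R)|_{kG}$ is a coquasitriangular structure on $kG$. Arguing exactly as in the proof of Proposition 3.1, equation (2.8) gives $hg = \gr(\R)(g,h)\,gh\,\overline{\gr(\R)}(g,h) = gh$ for all $g,h \in G$, whence $G$ is abelian. By Proposition 3.1 the quiver $Q = Q(G,R)$ is then a coquasitriangular Hopf quiver.

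The remaining and most delicate task is to extend $\gr(\R)$ to a coquasitriangular structure $\mathfrak{R}$ on all of $kQ$. By construction $\gr(\R)$ concentrates at degree $0$, so $\beta := \gr(\R)|_{kG}$ is a bicharacter of $G$ (Lemma 3.2), and I would simply take $\mathfrak{R}$ to be its degree-$0$ extension, namely $\mathfrak{R}(x,y)=\beta(x,y)$ for $x,y \in Q_0$ and $\mathfrak{R}(x,y)=0$ otherwise. To see that this $\mathfrak{R}$ is coquasitriangular for the graded product on $kQ$ produced above, the key is that $\beta$ and that product are compatible in the sense of Theorem 3.3. Indeed, because $\gr(H)$ is large it contains every arrow, and the relations (3.2)-(3.3) of Lemma 3.2 hold inside $\gr(H)$; combined with the skew-symmetry of $\beta$ on the connected component $Q(\unit)$ (which is forced by the forward direction of Theorem 3.3), these relations show that the $kG$-Hopf bimodule structure on $kQ_1$ coming from $\gr(H)$ is exactly the one $\beta$ determines through $h \triangleright \alpha = \beta(g_0,h)\alpha$ for an arrow $\alpha \colon \unit \To g_0$. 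Theorem 3.3, applied on $Q(\unit)$ and then transported across $Q$ by the crossed-product decomposition of Subsection 3.3, then certifies that $(kQ,\mathfrak{R})$ is coquasitriangular and that $(\gr(H),\gr(\R))$ sits inside it as a large sub coquasitriangular Hopf structure. The axioms (2.5)-(2.6) are immediate since $\mathfrak{R}$ vanishes off degree $0$.

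I expect the genuine obstacle to be precisely this compatibility: one must check that the Hopf bimodule on $kQ_1$ inherited from the \emph{given} graded algebra $\gr(H)$ coincides with the one Theorem 3.3 builds from $\beta$, and one must correctly interleave the connected-component computation with the crossed-product decomposition so that the skew-symmetry demanded by Theorem 3.3 is available exactly where it is used. Once this identification is in place the rest is bookkeeping with the definition of $\gr(\R)$ and the axioms (2.5)-(2.8).
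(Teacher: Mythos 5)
Your proposal is correct and takes essentially the same route as the paper's own proof: degenerate via Lemma 4.1, invoke the Van Oystaeyen--Zhang Gabriel type theorem for the unique Hopf quiver $Q$, deduce that $G$ is abelian and $\gr(\R)$ restricts to a bicharacter by working in degree $0$, identify the $kG$-Hopf bimodule on $kQ_1$ with the one determined by that bicharacter (the Theorem 3.3 argument, which is valid inside $\gr(H)$ precisely because it is large), and take $\mathfrak{R}$ to be the trivial extension. If anything, you are more explicit than the paper about the connectedness hypothesis in Theorem 3.3 and the crossed-product transport of Subsection 3.3, details the paper compresses into ``by the same argument as in the proof of Theorem 3.3.''
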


\begin{proof}
Let $G$ denote the set of group-like elements of $H.$ Then the
coradical $H_0$ of $H$ is the group algebra $kG.$ By restricting the
function $\R,$ we have a sub coquasitriangular Hopf algebra
$(kG,\R).$ This implies that $G$ is an abelian group and $\R$ is a
bicharacter of it. By the Gabriel type theorem for general pointed
Hopf algebras, there exists a unique Hopf quiver $Q=Q(G,R)$ such
that $\gr(H)$ is isomorphic to a large sub Hopf algebra of some
graded Hopf structure on $kQ$ determined by the $kG$-Hopf bimodule
$H_1/H_0.$ Note that $(\gr(H),\gr(\R))$ is a graded
coquasitriangular Hopf algebra with $\gr(\R)$ concentrating at
degree zero. By the same argument as in the proof of Theorem 3.3, we
can show that the $kG$-Hopf bimodule $H_1/H_0$ is completely
determined by the bicharacter $\gr(\R)$ and so the associated graded
Hopf structure on $kQ$ is coquasitriangular. Let $\mathfrak{R}$ be
the trivial extension of $\gr(\R)$ to the whole $kQ \otimes kQ,$
then $(kQ,\mathfrak{R})$ is coquasitriangular and apparently
$(\gr(H),\gr(\R))$ is isomorphic to a large sub structure of
$(kQ,\mathfrak{R}).$ This completes the proof.
\end{proof}

\subsection{}
According to a generalization of the Cartier-Gabriel decomposition
theorem due to Montgomery \cite{mont2} (Theorem 3.2), the study of
general coquasitriangular pointed Hopf algebras can be reduced to
the connected case, namely their quivers (in the sense of Theorem
4.2) are connected. This can also be seen intuitively by the fact
that general Hopf quivers consisting of copies of identical
connected components.

For a coquasitriangular pointed Hopf algebra $(H,\R),$ let
$(\gr(H),\gr(\R)) \hookrightarrow (kQ,\mathfrak{R})$ be an embedding
as in Theorem 4.2. By $\gr(H)(\epsilon)$ we denote the image in the
connected component $kQ(\unit).$ This is the principal block of
$\gr(H)$ which is a sub coquasitriangular Hopf algebra with
structure $\gr(\R)(\unit)$ obtained by the obvious restriction. The
other blocks can be obtained by multiplying group-like elements.
Then $\gr(H)$ can be recovered as a crossed product of
$\gr(H)(\epsilon)$ by a group algebra. The coquasitriangular
structure can also be recovered by extending that of
$\gr(H)(\epsilon)$ via the formulae (2.5)-(2.6).

Now we have the following directly from Theorem 4.2 and Corollary
3.5.

\begin{corollary}
Let $(H,\R)$ be a coquasitriangular pointed Hopf algebra and
$(\gr(H),\gr(\R))$ its degeneration as before. Then
$(\gr(H)(\epsilon),\gr(\R)(\unit))$ is cotriangular and
$(\gr(H),\gr(\R))$ is a crossed product of
$(\gr(H)(\epsilon),\gr(\R)(\unit))$ by a group algebra. In
particular, if the quiver of $H$ is connected, then
$(\gr(H),\gr(\R))$ is cotriangular.
\end{corollary}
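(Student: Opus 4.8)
The plan is to deduce all three assertions by combining the quiver embedding of Theorem 4.2 with the cotriangularity results of Corollary 3.5, the crossed product decomposition having already been set up in Subsection 4.3. First I would apply Theorem 4.2 to obtain a coquasitriangular Hopf quiver $Q=Q(G,R)$ and an embedding of $(\gr(H),\gr(\R))$ as a large sub coquasitriangular Hopf structure of $(kQ,\mathfrak{R})$. By construction $\gr(\R)$, and hence its trivial extension $\mathfrak{R}$, concentrates at degree 0; in particular the degree-0 truncation of Subsection 3.4 applied to $\mathfrak{R}$ returns $\mathfrak{R}$ itself, so that Corollary 3.5 is directly applicable to $(kQ,\mathfrak{R})$.

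To prove the cotriangularity of the principal block, I would restrict attention to the connected component $kQ(\unit)$. By Subsection 4.3 the block $\gr(H)(\epsilon)$ embeds into $(kQ(\unit),\mathfrak{R}(\unit))$ as a sub coquasitriangular Hopf algebra, with $\gr(\R)(\unit)$ the corresponding restriction. Corollary 3.5(2) gives that $(kQ(\unit),\mathfrak{R}(\unit))$ is cotriangular, i.e. the identity (2.9), $\R(a_1,b_1)\R(b_2,a_2)=\e(a)\e(b)$, holds on all of $kQ(\unit)\otimes kQ(\unit)$. Since this is an identity demanded for every pair of elements, it is automatically inherited by the sub coquasitriangular Hopf algebra $\gr(H)(\epsilon)$; thus $(\gr(H)(\epsilon),\gr(\R)(\unit))$ is cotriangular.

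The crossed product assertion is already provided by Subsection 4.3, where $\gr(H)$ is recovered as a crossed product of $\gr(H)(\epsilon)$ by a group algebra, with $\gr(\R)$ reconstructed from $\gr(\R)(\unit)$ via the multiplicativity axioms (2.5)-(2.6); for this part I would simply cite that discussion. For the last assertion, when the quiver of $H$ is connected one has $Q=Q(\unit)$ and therefore $\gr(H)(\epsilon)=\gr(H)$, so the cotriangularity of $(\gr(H),\gr(\R))$ is immediate from the first part (equivalently, from Corollary 3.5(1)).

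I expect no serious obstacle here, since the substantive work — the abelianness of $G$, the skew-symmetry of the defining bicharacter, and the cotriangularity in the connected quiver case — has already been carried out in Theorem 3.3, Corollary 3.5 and Theorem 4.2. The only point requiring mild care is the bookkeeping that identifies $\gr(\R)(\unit)$ with the appropriate restriction of $\mathfrak{R}(\unit)$ and confirms that cotriangularity survives passage to the sub Hopf algebra $\gr(H)(\epsilon)\subseteq kQ(\unit)$; both become routine once the degree-0 concentration of $\mathfrak{R}$ is noted.
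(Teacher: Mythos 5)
Your proposal is correct and follows essentially the same route as the paper, which deduces the corollary directly from the embedding of Theorem 4.2, the cotriangularity statement of Corollary 3.5, and the crossed product discussion of Subsection 4.3. The details you add---that $\mathfrak{R}$ already concentrates at degree 0 so Corollary 3.5 applies verbatim, and that the identity (2.9) passes to the sub coquasitriangular Hopf algebra $\gr(H)(\epsilon)\subseteq kQ(\unit)$---are exactly the bookkeeping the paper leaves implicit.
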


It is interesting to note that, coquasitriangular pointed Hopf
algebras can always be obtained as lifting of cotriangular ones and
crossed product by group algebras if disconnected.

\subsection{}
We include briefly a dual quiver setting for quasitriangular
elementary Hopf algebras. In this case, we should use the path
algebras of finite quivers (i.e., the set consisting of vertices and
arrows is finite) and their admissible quotients to construct
algebras.

Let $Q$ be a finite quiver. The associated path algebra, denoted by
$kQ^a,$ has the same underlying vector space as the path coalgebra
and the multiplication is defined by concatenation of paths. In
fact, the path algebra $kQ^a$ is the graded dual of the path
coalgebra $kQ.$ The path algebra $kQ^a$ admits a graded Hopf
structure if and only if $Q$ is a Hopf quiver, see \cite{cr1,gs}.

For a quasitriangular elementary Hopf algebra $(H,R),$ we can
consider the graded version $(\gr(H),\gr(R))$ induced by its chain
of Jacobson radical. With this, a quiver setting for such class of
Hopf algebras is given in the following. We state the dual of
Proposition 3.1, Theorems 3.3 and 4.2, Corollary 4.3 without further
explanation.

\begin{theorem}
Let $Q$ be a finite quiver.
\begin{enumerate}
  \item Then the path algebra $kQ^a$ admits a graded quasitriangular Hopf
structure if and only if $Q$ is the Hopf quiver $Q(G,S)$ of some
abelian group $G$ with respect to a ramification datum $S.$
  \item Assume that $Q=Q(G,S)$ is a connected Hopf quiver with $G$
abelian. Then the set of graded quasitriangular Hopf algebras
$(kQ^a,R)$ with $kQ^a_0=(kG)^*$ and $R \in kQ^a_0 \otimes kQ^a_0$ is
in one-to-one correspondence with the set of skew-symmetric
bicharacters of $G.$
\end{enumerate}
\end{theorem}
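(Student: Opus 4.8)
The plan is to prove this theorem by dualizing each of the coalgebraic results established in Sections~3 and~4, using the fact (recalled in this subsection) that the path algebra $kQ^a$ is precisely the graded dual of the path coalgebra $kQ$, and that $kQ^a$ carries a graded Hopf structure exactly when $Q$ is a Hopf quiver. The key principle is that a finite-dimensional Hopf algebra $A$ is quasitriangular with $R$-matrix $R\in A\otimes A$ if and only if its dual $A^*$ is coquasitriangular with $R$-form $\R\in (A\otimes A)^*=A^*\otimes A^*$ given by the same tensor, the defining axioms (2.1)--(2.3) for $(A,R)$ being term-by-term dual to the axioms (2.5)--(2.7) for $(A^*,\R)$. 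Since $kQ^a=(kQ)^*$ as graded Hopf algebras, a graded quasitriangular structure on $kQ^a$ corresponds to a graded coquasitriangular structure on $kQ$, and I would carry out the argument by transporting statements across this duality.

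For part (1), I would first note that $kQ^a$ admits a graded quasitriangular Hopf structure if and only if its dual $kQ=(kQ^a)^*$ admits a graded coquasitriangular Hopf structure, i.e.\ if and only if $Q$ is a coquasitriangular Hopf quiver in the sense of Section~3. By Proposition~3.1 this happens precisely when $Q=Q(G,S)$ for some abelian group $G$ and ramification datum $S$. The only point requiring a word of care is that Proposition~3.1 is stated for possibly infinite quivers, whereas here $Q$ is finite; but finiteness is preserved under the correspondence and imposes no extra condition beyond $G$ being a finite abelian group and $S$ having finite support, so the statement goes through verbatim.

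For part (2), I would restrict to a connected $Q=Q(G,S)$ with $G$ abelian and observe that, under the graded duality $kQ^a_0=(kG)^*$, an element $R\in kQ^a_0\otimes kQ^a_0$ corresponds exactly to a coquasitriangular structure $\R$ on $kQ$ concentrating at degree~$0$ in the sense of Subsection~3.2. Theorem~3.3 then gives a one-to-one correspondence between such $\R$ and the skew-symmetric bicharacters of $G$, and dualizing yields the claimed bijection for the $R$-matrices. Here I would check that the degree-zero part $kQ_0=kG$ dualizes to $(kG)^*=kQ^a_0$ compatibly with the Hopf structure, so that the bicharacter condition (3.2) and the skew-symmetry condition $\R(g,h)\R(h,g)=1$ translate without change into the corresponding conditions on the dual side.

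The main obstacle I anticipate is purely bookkeeping: verifying that the duality functor $(-)^*$ respects the grading, the Hopf structure, and the one-to-one correspondence between Hopf bimodule structures on $kQ_1$ and graded Hopf structures (recalled in Subsection~2.3), so that \emph{graded} quasitriangular structures on $kQ^a$ match \emph{graded} coquasitriangular structures on $kQ$ rather than arbitrary ones. Once this compatibility is secured, both statements follow formally from Proposition~3.1 and Theorem~3.3 with no further computation, which is exactly why the authors state the theorem ``without further explanation.''
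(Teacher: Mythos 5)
Your proposal is correct and takes exactly the paper's approach: the paper presents this theorem as the dual of Proposition 3.1 and Theorem 3.3 ``without further explanation,'' and your dualization argument supplies precisely the bookkeeping that makes this legitimate. One caveat: since $kQ^a$ is typically infinite-dimensional (e.g.\ for a basic cycle), the finite-dimensional duality principle you invoke must be replaced throughout by graded duality with finite-dimensional homogeneous components, and in particular the ``if'' direction of (1) should be argued by noting that the trivial coquasitriangular form $\e \otimes \e$ on $kQ$ corresponds to the element $1 \otimes 1 \in kQ^a_0 \otimes kQ^a_0$ (an arbitrary $R$-form on $kQ \otimes kQ$ need not arise from an element of $kQ^a \otimes kQ^a$, so the two existence statements are not formally equivalent without this observation).
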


\begin{theorem}
Let $(H,R)$ be a quasitriangular elementary Hopf algebra and
$(\gr(H),\gr(R))$ its radically graded version. Then there exists a
unique Hopf quiver $Q(H)=Q(G,S)$ with $G$ abelian such that $\gr(H)
\cong kQ(H)^a/I$ as graded quasitriangular Hopf algebras, where $I$
is an admissible Hopf ideal.
\end{theorem}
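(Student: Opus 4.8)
The plan is to derive this statement from its already-established dual counterpart, Theorem 4.2, by passing to the linear dual, exactly as the authors signal in Subsection 4.4. The backbone is the standard dictionary between elementary and pointed Hopf algebras. Since $H$ is elementary it is finite-dimensional with all simple modules one-dimensional, so its linear dual $H^*$ is a finite-dimensional pointed Hopf algebra. Under this dualization the filtration of $H$ by powers of its Jacobson radical $\mathrm{rad}(H)$ corresponds to the coradical filtration of $H^*$, via $(H^*)_n = (\mathrm{rad}(H)^{n+1})^\perp$; consequently the radically graded $\gr(H)$ is the graded dual of the coradically graded $\gr(H^*)$. A universal $R$-matrix $R \in H \otimes H$ transports along the canonical pairing to a universal $R$-form $\R \colon H^* \otimes H^* \To k$, so that $(H^*, \R)$ is a coquasitriangular pointed Hopf algebra and, under the graded-dual identification, $\gr(R)$ is carried to $\gr(\R)$.

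With this in hand I would apply Theorem 4.2 to $(H^*, \R)$. It yields a unique coquasitriangular Hopf quiver $Q = Q(G, S)$, where $G = G(H^*)$ is a finite abelian group---finite because $H^*$ is finite-dimensional, and abelian by the restriction-to-$kG$ argument used in the proof of Theorem 4.2---together with a large sub coquasitriangular Hopf algebra embedding $(\gr(H^*), \gr(\R)) \hookrightarrow (kQ, \mathfrak{R})$. Since $Q$ is a finite quiver, its path coalgebra has graded dual the path algebra $kQ^a$ (Subsection 4.4), and taking the graded dual of the embedding turns it into a surjection of graded quasitriangular Hopf algebras $kQ^a \twoheadrightarrow \gr(H)$. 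Setting $Q(H) := Q(G, S)$ and letting $I$ be the kernel of this surjection gives $\gr(H) \cong kQ(H)^a / I$ as graded quasitriangular Hopf algebras.

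It then remains to identify $I$ as an admissible Hopf ideal and to record uniqueness. That $I$ is a Hopf ideal is automatic, being the kernel of a Hopf algebra surjection. For admissibility, the largeness of $\gr(H^*)$---that it contains $kQ_0 \oplus kQ_1$, i.e.\ the embedding is onto in degrees $0$ and $1$---dualizes to the statement that the surjection is an isomorphism in degrees $0$ and $1$, whence $I \subseteq J^2$ for $J$ the arrow ideal; finite-dimensionality of $\gr(H)$ then forces $J^N \subseteq I$ for some $N$, which is the remaining half of admissibility. Uniqueness of $Q(H)$ is inherited directly from the uniqueness clause of Theorem 4.2.

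The one point requiring genuine care---the main obstacle---is checking that the duality respects all structures at once: that the graded dual of the coquasitriangular embedding is again a morphism intertwining the (co)quasitriangular data, and that the correspondence between ``large sub coalgebra'' and ``admissible quotient'' is clean at the level of graded pieces. Each ingredient is routine, but tracking the pairing that sends $\gr(\R)$ back to $\gr(R)$ through the graded dual is where convention errors would most easily slip in; this is the dual analogue of the verifications left implicit in Subsection 4.4.
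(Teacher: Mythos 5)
Your proposal is correct and follows exactly the route the paper intends: the paper states this theorem in Subsection 4.4 as the dual of Theorem 4.2 ``without further explanation,'' and your argument is precisely that dualization carried out in detail -- elementary/pointed duality, $(H^*)_n = (\mathrm{rad}(H)^{n+1})^{\perp}$, transport of $R$ to $\R$, dualizing the large embedding to a surjection, and reading off admissibility of $I$ from largeness plus finite-dimensionality. The details you supply (in particular that largeness dualizes to $I \subseteq J^2$ and finite dimension gives $J^N \subseteq I$) are the correct ones that the paper leaves implicit.
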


\begin{corollary}
Let $(H,R)$ be a quasitriangular elementary Hopf algebra and
$(\gr(H),\gr(R))$ its radically graded version. Then the principal
block of $\gr(H)$ is a quotient Hopf algebra and is triangular. The
quasitriangular Hopf algebra $(\gr(H),\gr(R))$ can be presented as a
crossed product of its principal block by the dual of a group
algebra in the sense of Schneider \cite{sch}.
\end{corollary}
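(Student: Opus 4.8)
The plan is to obtain this corollary by dualizing its coquasitriangular analogue, Corollary 4.3, in exactly the way Theorems 4.4 and 4.5 were obtained by dualizing their pointed counterparts. The starting point is that, since $H$ is elementary, it is finite-dimensional with one-dimensional simple modules, so its linear dual $H^*$ is a finite-dimensional pointed Hopf algebra; and a universal $R$-matrix $R \in H \otimes H$ induces a universal $R$-form on $H^*$ by the pairing $\R(f,g) = (f \otimes g)(R)$ for $f,g \in H^*$. First I would check that $(H^*, \R)$ is indeed coquasitriangular, which is the standard duality between the axioms (2.1)--(2.3) and the axioms (2.5)--(2.7), and that the triangularity condition (2.4) on $R$ corresponds under this pairing to the cotriangularity condition (2.9) on $\R$.

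Next I would match up the graded pictures on the two sides. The Jacobson radical filtration of $H$ is dual to the coradical filtration of $H^*$, so the radically graded Hopf algebra $\gr(H)$ is the graded dual of the coradically graded Hopf algebra $\gr(H^*)$, and under this identification $\gr(R)$ corresponds to $\gr(\R)$. Thus $(\gr(H), \gr(R))$ and $(\gr(H^*), \gr(\R))$ are dual quasitriangular/coquasitriangular Hopf algebras. I may then apply Corollary 4.3 to the coquasitriangular pointed Hopf algebra $(H^*, \R)$: its principal block $(\gr(H^*)(\epsilon), \gr(\R)(\unit))$ is a sub coquasitriangular Hopf algebra which is cotriangular, and $(\gr(H^*), \gr(\R))$ is a crossed product of this principal block by a group algebra.

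Finally I would transport these conclusions back through the duality. A sub coquasitriangular Hopf algebra of $\gr(H^*)$ dualizes to a quotient quasitriangular Hopf algebra of $\gr(H)$; this quotient is by definition the principal block of $\gr(H)$, and the cotriangularity of $\gr(H^*)(\epsilon)$ becomes the triangularity of the principal block, which gives conclusion (1). For conclusion (2), the crossed product of $\gr(H^*)$ by a group algebra dualizes, via Schneider's duality for crossed products \cite{sch}, to a presentation of $(\gr(H), \gr(R))$ as a crossed product of its principal block by the dual of a group algebra.

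The hard part will be the last step: making Schneider's dualization precise enough to see that the group-algebra crossed-product decomposition of Corollary 4.3 transforms into a crossed product by the \emph{dual} of a group algebra, and confirming that the induced quasitriangular structure is the one claimed. By contrast, the verification that the radical filtration dualizes to the coradical filtration and carries $\gr(R)$ to $\gr(\R)$ is routine, but it should be stated carefully, since everything downstream depends on the two graded (co)quasitriangular structures matching up under the duality.
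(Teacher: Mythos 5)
Your proposal is correct and is exactly the approach the paper takes: the paper states this corollary as the dual of Corollary 4.3 ``without further explanation,'' i.e.\ it relies on precisely the dualization $H \mapsto H^*$ (elementary $\leftrightarrow$ pointed, radical filtration $\leftrightarrow$ coradical filtration, $R$-matrix $\leftrightarrow$ $R$-form, triangular $\leftrightarrow$ cotriangular, quotient block $\leftrightarrow$ sub block, crossed product by $(k\Gamma)^*$ $\leftrightarrow$ crossed product by $k\Gamma$) that you spell out. Your outline simply fills in the routine finite-dimensional duality details that the paper leaves implicit, including the appeal to Schneider's crossed-product duality.
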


\section{examples}

In this section, we construct some examples via the quiver setting.
For the convenience of the exposition, we assume in this section
that the ground field $k$ is the field of complex numbers.

\subsection{}
Let $\Z_n=\langle g \ | \ g^n=1\rangle$ denote the finite cyclic
group of order $n.$ Consider the (coquasitriangular) Hopf quiver
$Q(\Z_n,g).$ It is a basic cycle of length $n.$ For each integer $i$
modulo $n,$ let $a_i$ denote the arrow $g^i \longrightarrow
g^{i+1}.$ Let $\R$ be a skew-symmetric bicharacter of $\Z_n.$ Then
it is completely determined by the value $\lambda=\R(g,g)$ which
should satisfy the equation \[ \lambda^2=1 \] by skew-symmetry. In
general, we have $\R(g^i,g^j)=\lambda^{ij}.$

If $n$ is odd, then $\lambda$ must be $1.$ Therefore, there is only
one skew-symmetric bicharacter, and hence only one graded
coquasitriangular Hopf structure concentrating at degree 0 which is
isomorphic to $k[x] \otimes k\Z_n$ as algebra.

If $n$ is even, then $\lambda=\pm 1.$ The case when $\lambda=1$ is
known. When $\lambda=-1,$ then we can associate to the bicharacter a
$k\Z_n$-Hopf bimodule on $kQ(\Z_n,g)_1$ as follows: \[ ga_i=a_{i+1},
\quad a_ig=-a_{i+1} \] for all integer $i$ modulo $n.$ This gives
rise to a cotriangular Hopf algebra on $kQ(\Z_n,g)$ with path
multiplication given by
\begin{equation*}
p_i^l p_j^m= \left\{
                     \begin{array}{ll}
                       0, & \hbox{if $l,m$ are odd;} \\
                       (-1)^{im}p_{i+j}^{l+m}, & \hbox{otherwise.}
                     \end{array}
                   \right.
\end{equation*}
Here the notation $p_i^l$ means the path of length $l$ with source
$g^i.$ In particular, if we consider the sub Hopf algebras generated
by vertices and arrows, we get the generalized Taft algebra
$C_2(n,-1)$ as denoted in \cite{chyz}. It can be presented by
generators $g$ and $x$ with relations \[ g^n=1, \quad x^2=0, \quad
gx=-xg. \] Note that if $n=2$ then it is exactly the well-known
Sweedler's 4-dimensional Hopf algebra. Such Hopf algebras appeared
in the works of Radford \cite{rad1,rad2} in which their
quasitriangular structures were classified.

Now we determine all the coquasitriangular structures on
$C_2(n,-1).$ Clearly it has $\{g^ix^l | i=0,1,\cdots n-1, \ l=0,1\}$
as a basis. Let $\R$ be a coquasitriangular structure. Then we
already know $\R(g^i , g^j)=(-1)^{ij}.$ By \[ 0=\R(g , x^2) =\R(g ,
x) \R(g , x) \] we have $\R(g , x)=0.$ Then by (2.5)-(2.6) it
follows that $\R(g^i , g^jx)=0.$ Similarly, we have $\R(g^ix ,
g^j)=0.$ By using (2.8) we have
\[ 0=x^2=\R(x , x)1-\overline{\R}(x,x)g^2.\] On the other hand, by applying
$\R\bar{\R}=\e \otimes \e$ to $(x,x),$ we can show that
$\R(x,x)=\bar{\R}(x,x).$ That means, \[ \R(x,x)(1-g^2)=0. \] This
forces $\R(x , x)=0$ if $n>2.$ Similarly by (2.5)-(2.6) we have in
this case $\R(g^ix , g^jx)=0.$ That is, the coquasitriangular
structure on $C_2(n,-1)$ ($n > 2$) is unique which concentrates at
degree 0. If $n=2,$ then we have $\R(x,x)=\overline{\R}(x,x)$ and
$\R(x , x)$ can take any value. Assume $\R(x , x)=\nu.$ Then by
(2.5)-(2.6) it is direct to deduce that \[ \R(gx , x)=-\nu, \quad
\R(x , gx)=\nu, \quad \R(gx , gx)=\nu. \] Of course this is known
from Radford's calculation \cite{rad1} by the fact that the
Sweedler's 4-dimensional Hopf algebra is self-dual.

We remark that the dual of $C_2(n,-1)$ can be presented by
$kQ(\Z_n,g)^a/J^2$ where $J$ is the ideal generated by the set of
arrows. Therefore the elementary Hopf algebra $kQ(\Z_n,g)^a/J^2$ ($n
>2$) has a unique quasitriangular structure, while $kQ(\Z_2,g)^a/J^2$ has a 1-parameter family of
quasitriangular structures. This approach via path algebras was used
previously by Cibils in \cite{c} for the quiver $Q(\Z_n,g).$

\subsection{}
Now we consider the infinite cyclic group $Z=\langle g\rangle$ and
the Hopf quiver $Q(Z,g)$ which is a linear chain. The group $Z$ has
two skew-symmetric bicharacters, namely, $\R(g^i , g^j)=(\pm
1)^{ij}.$ For the trivial bicharacter, the corresponding
coquasitriangular Hopf algebra on $kQ(Z,g)$ is isomorphic to $k[x]
\otimes kZ$ as algebra. For the nontrivial bicharacter, the
coquasitriangular Hopf algebra on $kQ(Z,g)$ has multiplication
formula similar to (5.1). The sub Hopf algebra generated by vertices
and arrows can be presented by generators $g, \ g^{-1}$ and $x$ with
relations \[ gg^{-1}=1=g^{-1}g, \quad x^2=0, \quad gx=-xg. \] By
arguments in the same manner as Subsection 5.1, one can show that it
has a unique coquasitriangular structure concentrating at degree 0.

As a direct consequence of the previous two examples, we can see
that quantized enveloping algebras and small quantum groups are not
coquasitriangular since their subalgebras $U_q(sl_2)^{\ge 0}$ and
$u_q(sl_2)^{\ge 0}$ have $Q(Z,g)$ and $Q(\Z_n,g)$ respectively as
their quivers (see for instance \cite{lus,hsaq1}) but with different
algebra structures from the coquasitriangular ones we have just
classified.

\subsection{}
Note that basic cycles and the linear chain are minimal connected
Hopf quivers and are basic ingredients of the general. The previous
examples classify all the the possible coquasitriangular structures
on these building blocks of general coquasitriangular Hopf quivers.
Next we will consider those examples which are compatible gluing of
them.

Firstly we consider the Hopf quiver $Q=Q(\Z_n,mg).$ It is a
multi-cycle. To avoid the trivial case, we assume $n=2l, \ m>1$ and
$\R$ is the nontrivial skew-symmetric bicharacter of $\Z_n.$ Let
$x_i \ (i=1, \cdots, m)$ denote the arrows with source $1.$ The we
have the following multiplication formulae in $(kQ,\R):$
\[ x_i^2=0, \quad x_ix_j=-x_jx_i, \quad gx_i=-x_ig. \] The sub Hopf
algebra, denote by $E(n,m),$ generated by vertices and arrows can be
presented by generators $g$ and $x_i \ (i=1, \cdots, m)$ with
additional relations $g^n=1.$ Therefore the algebra has a PBW type
basis
\[ \{g^ix_1^{\sigma_1} \cdots x_m^{\sigma_m} | i=0,1,\cdots,n-1; \
\sigma_j=0,1 \}. \] It is not difficult to determine all the
coquasitriangular structures. Let $\R$ be a coquasitriangular
structure. As in Subsection 5.1, we have \[ \R(g^i , g^j)=(-1)^{ij},
\quad \R(g^i , g^jx_k)=0=\R(g^ix_k , g^j)\] and \[ \quad \R(x_i ,
x_j)(1-g^2) = 0. \] If $n > 2,$ then we have $\R(x_i , x_j)=0,$ and
further $\R(g^ux_i , g^vx_j)=0$ for all $i$ and $j.$ Therefore
$E(n,m)$ has only one coquasitriangular structure which is the
trivial extension of the bicharacter. If $n=2,$ then the Hopf
algebra $E(2,m)$ is known as Nichols' Hopf algebra \cite{n} and its
set of quasitriangular structures were classified by Panaite and Van
Oystaeyen in \cite{pvo}. By the previous equations we can show that
its set of coquasitriangular structures is in one-to-one
correspondence with the set of the matrices $(\R(x_i , x_j))_{m
\times m}.$ By the fact that $E(2,m)$ is self-dual, this coincides
with the result of Panaite and Van Oystaeyen.

\subsection{}
Finally we consider the Hopf quiver $Q=Q(G,g+h)$ where $G$ is the
abelian group $<g> \times <h>.$ Assume that the order of $g$ and $h$
are $m$ and $n$ respectively. A skew-symmetric bicharacter $\R$ of
$G$ is determined by three values $\R(g , g),\R(h , h),\R(g , h).$
As before, we have the equation \[ \R(g , g)^2=1=\R(h , h)^2. \] Let
$(m,n)$ denote the greatest common divisor of $m$ and $n.$ Then the
order of $\R(g,h)$ should satisfy
\[ \ord \R(g,h) | (m,n). \]

In order to have more interesting examples, we assume further that
$m$ and $n$ are even. Take the bicharacter $\R$ such that \[ \R(g ,
g)=-1=\R(h , h).\] Let $q$ denote $\R(g , h).$ Consider the sub
coquasitriangular Hopf algebra, denoted by $H(m,n,q),$ of $(kQ,\R)$
generated by the set of vertices and arrows. Let $x: 1 \To g$ and
$y: 1 \To h$ be the arrows with source $1.$ Then $H(m,n,q)$ is
generated by $g,h,x,y$ satisfying the following relations
\begin{gather*}
g^m=1=h^n, \quad x^2=0=y^2, \quad gx=-xg, \quad hy=-yh, \\
\quad hx=qxh, \quad yg=qgy, \quad yx=qxy.
\end{gather*}
As before, it is not hard to determine the complete list of
coquasitriangular structures on $H(m,n,q).$ we do not repeat the
detail.

\section{classification of finite-dimensional coquasitriangular pointed hopf algebras over an algebraically closed
field of characteristic 0}

From now on our ground field $k$ is assumed to be algebraically
close of characteristic 0. The aim of this section is to give a
complete classification of finite-dimensional coquasitriangular
pointed Hopf algebras over $k.$

\subsection{}
Firstly we consider the generation problem of finite-dimensional
coquasitriangular pointed Hopf algebras. In \cite{aeg}
Andruskiewitsch, Etingof and Gelaki proved that finite-dimensional
cotriangular pointed Hopf algebras are generated by their group-like
and skew-primitive elements. It turns out that, with our observation
Corollary 4.3, one can extend their result to more general case in a
fairly straightforward manner.

\begin{proposition}
Suppose that $H$ is a finite-dimensional coquasitriangular pointed
Hopf algebra over $k.$ Then as an algebra $H$ is generated by its
group-like and skew-primitive elements.
\end{proposition}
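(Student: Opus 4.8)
The plan is to leverage the structural decomposition furnished by Corollary 4.3 together with the already-established result of Andruskiewitsch, Etingof and Gelaki \cite{aeg} for the cotriangular case. The key insight motivated by the quiver setting is that the coradically graded version $\gr(H)$ decomposes as a crossed product of its principal block $\gr(H)(\unit)$ (which is cotriangular by Corollary 4.3) by a group algebra, and that generation is preserved under both this decomposition and the lifting procedure.

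First I would pass to the coradically graded version. By Lemma 4.1, $(\gr(H),\gr(\R))$ is again a coquasitriangular pointed Hopf algebra, and it is finite-dimensional whenever $H$ is. The crucial reduction is that it suffices to prove $\gr(H)$ is generated in degrees $0$ and $1$ (i.e. by group-likes and skew-primitives): a standard argument shows that if $\gr(H)$ is generated by $\gr(H)_0 \oplus \gr(H)_1$, then $H$ itself is generated by its group-like and skew-primitive elements, since the generation property lifts from the associated graded algebra. So the problem is transferred entirely to the graded setting.

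Next I would invoke Corollary 4.3 to analyze $\gr(H)$ blockwise. The principal block $(\gr(H)(\unit),\gr(\R)(\unit))$ is cotriangular, so the theorem of \cite{aeg} applies directly and tells us this block is generated by its group-likes and skew-primitives. Since $\gr(H)$ is recovered as a crossed product of this principal block by a group algebra (the quotient $kG/Q(\unit)_0$), and the extra group-like elements supplied by the group algebra factor are themselves among the generators, one concludes that $\gr(H)$ is generated by group-likes together with the skew-primitives already living in the principal block. This shows $\gr(H)$ is generated in degrees $0$ and $1$.

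The \textbf{main obstacle} I anticipate is the interface between the crossed-product decomposition and the generation statement: one must verify carefully that adjoining the group algebra via the crossed product does not introduce new algebra generators in higher degrees beyond those already present in the principal block. Concretely, the skew-primitives of $\gr(H)$ must be accounted for entirely by (conjugates of) the skew-primitives of the principal block under the group action, so that no genuinely new degree-$1$ generators escape the cotriangular analysis. Once this compatibility is confirmed—using that all connected components $Q(x)$ are identical to $Q(\unit)$ and that the group-like elements conjugate the principal block onto the other blocks—the result for $H$ follows by the lifting argument mentioned above.
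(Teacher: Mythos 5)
Your proposal is correct and follows essentially the same route as the paper's own proof: reduce to the coradically graded version $\gr(H)$, use Corollary 4.3 to exhibit it as a crossed product of its cotriangular principal block by a group algebra, apply the Andruskiewitsch--Etingof--Gelaki theorem to that block, and recover the remaining blocks by multiplication with group-like elements. The ``main obstacle'' you flag is handled in the paper exactly by the observation you suggest---that the other blocks are translates of the principal block by group-likes---so your argument matches the paper's in both structure and level of detail.
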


\begin{proof}
To prove that $H$ is generated by its group-like and skew-primitive
elements, it suffices to prove this is the case for its graded
version $\gr H.$ By Corollary 4.3, the Hopf algebra $\gr H$ is
coquasitriangular and is a crossed product of a cotriangular one by
a group algebra. Note that the cotriangular one is the principal
block of $\gr H,$ while other blocks are obtained by multiplying
group-like elements. Now apply the theorem of Andruskiewitsch,
Etingof and Gelaki \cite{aeg} (Theorem 6.1), we can say that $\gr H$
is generated by its group-likes and the skew-primitives of its
principal block. The proposition follows immediately from this. The
proof is completeed.
\end{proof}

\subsection{}
Next we study a general finite-dimensional coquasitriangular pointed
Hopf algebras $(H,\R)$ via its quiver setting. We may assume without
loss of generality that $H$ is connected. Let $G$ denote its group
of group-like elements. The quiver $Q$ of $H$ is assumed to be
$Q(G,R)$ with ramification datum $R=\sum_{i=1}^t R_ig_i.$ Here the
$R_i$'s are assumed to be positive integers if $t \ge 1.$

\begin{proposition}
Keep the above assumptions and notations. If $G$ is the unit group
$\{1\},$ then $H \cong k.$ When $G \ne \{1\},$ we have
\begin{enumerate}
  \item $t \ge 1$ and the order of $g_i$ is even for all $i.$
  In particular the order of $G$ is even.
  \item $\R$ is a skew-symmetric bicharacter of the group $G$ (by restriction) and $\R(g_i , g_i)=-1$ for all $i.$
  \item $(\gr(H), \gr(\R))$ is isomorphic to the sub
  coquasitriangular Hopf algebra of $(kQ,\mathfrak{R})$ generated by
  vertices and arrows, where $\mathfrak{R}$ is obtained by trivial
  extension of the bicharacter $\R$ of $G.$
\end{enumerate}
\end{proposition}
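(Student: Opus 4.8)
The plan is to dispose of the degenerate case $G=\{1\}$ first, and then to assemble the three itemized claims from the machinery of Sections~3 and~4 together with the generation result of Proposition~6.1; the only genuinely new ingredient will be finite-dimensionality. If $G=\{1\}$, then by Proposition~6.1 the algebra $H$ is generated by its group-likes (only the unit) and its skew-primitives, which here are honest primitives. I would then invoke the standard characteristic-$0$ fact that a nonzero primitive $x$ generates an infinite-dimensional polynomial subalgebra: if $x^m=0$ with $m\ge 2$ minimal, then in $\D(x^m)=\sum_{j=0}^m\binom{m}{j}x^j\otimes x^{m-j}$ the extreme terms vanish while the middle ones are nonzero and linearly independent (as $x,\dots,x^{m-1}$ are independent by minimality), forcing $\D(x^m)\neq 0$, a contradiction. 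Hence all primitives vanish and $H=k$.

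Now suppose $G\neq\{1\}$. The bicharacter and skew-symmetry assertions of (2) come essentially for free: restricting $\R$ to $H_0=kG$ gives a coquasitriangular, hence by Lemma~3.2 a bicharacter, structure; passing to the embedding $(\gr(H),\gr(\R))\hookrightarrow(kQ,\mathfrak{R})$ of Theorem~4.2 and using that $Q$ is connected, Corollary~4.3 makes $(\gr(H),\gr(\R))$ cotriangular, and Theorem~3.3 identifies $\R|_G$ with a skew-symmetric bicharacter. For the first half of (1), connectedness of $Q(G,R)$ means exactly that $\langle g_1,\dots,g_t\rangle=G$, so $G\neq\{1\}$ forces $t\ge 1$; in particular there is at least one arrow $\alpha_i:1\To g_i$ for each $i$.

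The crux is the value $\R(g_i,g_i)=-1$. Skew-symmetry gives $\R(g_i,g_i)^2=1$, so $\R(g_i,g_i)=\pm 1$, and I must rule out $+1$ using finite-dimensionality. Here I expect the main obstacle: the relation (3.3) applied to $\alpha=\beta=\alpha_i$ only yields $(1-\R(g_i,g_i))\alpha_i^2=0$, which is consistent with \emph{both} signs, so it alone gives no contradiction. Instead I would argue that if $\R(g_i,g_i)=1$ then $\alpha_i$ generates a polynomial subalgebra inside the finite-dimensional $\gr(H)$: a direct evaluation of the two quantum shuffle terms of $\alpha_i\cdot\alpha_i$ via (2.10) — this is precisely the $\lambda=1$ situation of Subsection~5.1, where the resulting Hopf algebra is $k[x]\otimes kG$ — shows $\alpha_i^n\neq 0$ for all $n$, contradicting $\dim\gr(H)=\dim H<\infty$. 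The real work lies in this nonvanishing-of-all-powers computation. Granting $\R(g_i,g_i)=-1$, the bicharacter identity $(-1)^{\ord(g_i)}=\R(g_i,g_i)^{\ord(g_i)}=\R(g_i^{\ord(g_i)},g_i)=\R(1,g_i)=1$ forces $\ord(g_i)$ to be even, and Lagrange's theorem then makes $|G|$ even, completing (1) and (2).

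Finally, for (3) I would apply Proposition~6.1 to $\gr(H)$ itself, which is again a finite-dimensional coquasitriangular pointed Hopf algebra by Lemma~4.1: it is generated by its group-likes and skew-primitives. In the coradically graded embedding $\gr(H)\hookrightarrow kQ$ the group-likes span $kQ_0$, while the skew-primitives sit in coradical degree at most one and hence in $kQ_0\oplus kQ_1$; since the embedding is large it already contains all vertices and arrows. Therefore $\gr(H)$ coincides with the subalgebra of $kQ$ generated by vertices and arrows, and its coquasitriangular structure is the restriction of $\mathfrak{R}$, the trivial extension of the bicharacter $\R|_G$, exactly as claimed.
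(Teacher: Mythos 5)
Your proposal is correct and follows essentially the same route as the paper's proof: embedding $(\gr(H),\gr(\R))$ into $(kQ,\mathfrak{R})$ via Theorem 4.2, extracting skew-symmetry from Corollary 4.3, ruling out $\R(g_i,g_i)=1$ by the infinite-dimensionality of the $\lambda=1$ case of Subsection 5.1, and getting (3) from Proposition 6.1 together with largeness of the embedding. Your only deviations are cosmetic: the paper disposes of $G=\{1\}$ by observing via the shuffle formula (2.10) that an arrow would generate a polynomial algebra inside $kQ$ (rather than your intrinsic primitive-powers argument, which is also fine in characteristic $0$), and it deduces evenness of $\ord(g_i)$ from the Subsection 5.1 case analysis rather than your explicit bicharacter identity $\R(g_i,g_i)^{\ord(g_i)}=\R(g_i^{\ord(g_i)},g_i)=1$.
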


\begin{proof}
By Theorem 4.2, we can view $(\gr(H),\gr(\R))$ as a large sub
coquasitriangular Hopf algebra of $(kQ,\mathfrak{R}).$ Therefore
$\gr(H)$ contains the sub Hopf algebra generated by vertices and
arrows of $Q.$

If $G=\{1\},$ and suppose that there was an arrow $x$ in $Q.$ Then
by the multiplication formula (2.10) it is easy to verify that the
sub Hopf algebra generated by $x$ is actually isomorphic to the
polynomial algebra in one variable which is of course
infinite-dimensional. This is absurd since we assume the dimension
of $H,$ hence of $\gr(H),$ is finite. Therefore the quiver $Q$ is a
single vertex and so $H$ must be $k.$

In the following we assume that $G \ne \{1\}.$ Since the quiver $Q$
is assumed to be connected, so there are arrows and therefore the
ramification datum $R$ is not 0. It follows that $t \ge 1.$ By
Corollary 4.3, $(\gr(H),\gr(\R))$ is cotriangular, therefore
$\gr(\R)$ is a skew-symmetric bicharacter of $G$ which is the
restriction of $\R.$ This implies that, by restriction, $\R$ is a
skew-symmetric character of $G.$ By the assumption $R=\sum_{i=1}^t
R_ig_i,$ we know the quiver $Q$ contain the sub Hopf quivers
$Q(<g_i>,g_i)$ which are basic cycles. By Subsection 5.1, the sub
coquasitriangular Hopf algebra of $kQ(<g_i>,g_i)$ generated by
vertices and arrows is finite-dimensional if and only if $\R(g_i ,
g_i)=-1.$ This forces that the order of $g_i$ must be even. It is
immediate by Largrange's theorem that the order of $G$ is even. Now
we have proved (1) and (2). For (3), just note that the space of
$\gr(H)$ spanned by group-likes and skew-primitives is equivalent to
the space spanned by vertices and arrows of $Q.$ Now by Proposition
6.1 $\gr(H)$ is generated by its group-like and skew-primitive
elements, hence it is contained in the sub algebra of $kQ$ generated
by vertices and arrows. Along with the first paragraph, we have (3).
We are done.
\end{proof}

\subsection{}
As a direct consequence we give an explicit description for
coradically graded finite-dimensional coquasitriangular pointed Hopf
algebras via generators and defining relations.

\begin{corollary}
Let $H$ be a coradically graded finite-dimensional pointed Hopf
algebra with coquasitriangular structure $\R.$ Assume that it is
connected and its group of group-likes $G$ is not the unit group.
Then there exist a generating set $\{ g_i | i=1, \cdots, t \}$ of
the group $G$ and for each $g_i$ a set of $(1,g_i)$-primitive
elements $\{ x_{i,i'} | i'=1, \cdots, R_i \}$ such that $H$ is
generated by $\{g_i,x_{i,i'} | i=1, \cdots, t; i'=1, \cdots, R_i \}$
with defining relations
\begin{gather}
\text{the defining relations of $G$ by the $g_i's,$} \\
x_{j,j'}g_i=\R(g_i , g_j)g_ix_{j,j'}, \ \forall i,j,j', \\
x_{i,i'}^2=0, \ \forall i, i', \\
x_{j,j'}x_{i,i'}=\R(g_i , g_j)x_{i,i'}x_{j,j'}, \ \forall i,i',j,j'.
\end{gather}
Moreover, $H$ has a PBW type basis
\[\{gx_{1,1}^{\sigma_{1,1}} \cdots x_{1,R_1}^{\sigma_{1,R_1}} \cdots
x_{t,1}^{\sigma_{t,1}} \cdots x_{t,R_t}^{\sigma_{t,R_t}} | g \in G,
\sigma_{i,i'}=0,1 \}.\]
\end{corollary}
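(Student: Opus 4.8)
The plan is to read the presentation and the basis directly off the concrete model provided by Proposition 6.2. Since $H$ is already coradically graded we have $H=\gr(H)$, and Proposition 6.2(3) realizes it as the sub coquasitriangular Hopf algebra of $(kQ,\mathfrak{R})$ generated by the vertices and arrows of $Q=Q(G,R)$, where $R=\sum_{i=1}^t R_ig_i$, the set $\{g_i\}$ generates $G$, the form $\R$ is a skew-symmetric bicharacter, and $\R(g_i,g_i)=-1$. First I would pin down the generators: the vertices are exactly the elements of $G$, so the $g_i$'s generate $G$ and satisfy its defining relations, which is (6.1); and for each $i$ the $R_i$ arrows with source $\unit$ and target $g_i$ are the elements $x_{i,i'}$. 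Applying the path comultiplication to such an arrow gives $\D(x_{i,i'})=x_{i,i'}\otimes \unit+g_i\otimes x_{i,i'}$, so each $x_{i,i'}$ is indeed $(1,g_i)$-primitive.

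Next I would deduce the three remaining families of relations from Lemma 3.2. Relation (6.2) is the arrow--group-element relation (3.2) applied to $\alpha=x_{j,j'}$ (so $s(\alpha)=\unit$, $t(\alpha)=g_j$) and $g=g_i$, which yields $x_{j,j'}g_i=\frac{\R(g_i,g_j)}{\R(g_i,\unit)}g_ix_{j,j'}=\R(g_i,g_j)g_ix_{j,j'}$ after using $\R(g_i,\unit)=1$. Relation (6.4) is the arrow--arrow relation (3.3) applied to $\beta=x_{j,j'}$ and $\alpha=x_{i,i'}$, both sourced at $\unit$, giving $x_{j,j'}x_{i,i'}=\frac{\R(g_i,g_j)}{\R(\unit,\unit)}x_{i,i'}x_{j,j'}=\R(g_i,g_j)x_{i,i'}x_{j,j'}$. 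Finally, (6.3) is the vanishing of the self-product of a single arrow on the basic cycle $Q(\langle g_i\rangle,g_i)\subseteq Q$; since the self-braiding is $\R(g_i,g_i)=-1$, the quantum shuffle computation of Subsection 5.1 applies verbatim and gives $x_{i,i'}^2=0$.

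With the relations in hand, the PBW claim separates into spanning and independence. For spanning I would rewrite an arbitrary monomial in the generators into normal form: use (6.2) to push all group-likes to the left, (6.4) to sort the arrow factors into the prescribed order, and (6.3) to cut every exponent down to $0$ or $1$; this exhibits every such monomial as a scalar multiple of one from the listed family, so the family spans $H$. The substantive step, and where I expect the main difficulty, is linear independence: one must show there is no further collapse among these $|G|\cdot 2^{\sum_i R_i}$ monomials. Here I would identify the subalgebra generated by the $x_{i,i'}$ with the Nichols algebra of a quantum linear space of diagonal braiding $q_{ij}=\R(g_i,g_j)$, $q_{ii}=-1$, whose square-free monomials are known to form a basis, and invoke the PBW and lifting theorem for quantum linear spaces of Andruskiewitsch and Schneider \cite{as1}; the full basis then follows by bosonizing with $kG$. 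Alternatively, independence can be obtained inside $kQ$ directly, since the length grading places the monomial indexed by $\sigma$ in degree $\sum_{i,i'}\sigma_{i,i'}$ and distinct square-free arrow products have distinct leading paths under the quantum shuffle product; matching the resulting count $|G|\cdot 2^{\sum_i R_i}$ with $\dim H$ then finishes the argument.
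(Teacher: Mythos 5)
Your proposal is correct, and its skeleton is the same as the paper's: read the concrete model off Proposition 6.2(3), take as generators the vertices together with the arrows sourced at $\unit$ (noting, as you should say explicitly, that every other arrow is a group-like translate of one of these, so $\{g_i,x_{i,i'}\}$ really generates), derive (6.1)--(6.4) from the quiver multiplication --- the paper says ``as in Section 5'', while your appeal to Lemma 3.2 applied to $(kQ,\mathfrak{R})$, whose $\mathfrak{R}$ concentrates at degree $0$, is the same computation and also absorbs the paper's preliminary reduction via Proposition 3.4 --- and then prove the PBW claim by spanning plus independence. Where you diverge is in how independence and the sufficiency of the relations are packaged. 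The paper obtains independence by direct computation with the multiplication formula (2.10) inside $kQ$, which is your ``alternative'' route (indeed distinct monomials $gx_{\sigma}$ have disjoint path supports, since each path remembers its source and the labels of its arrows), and then invokes Bergman's diamond lemma to see that the abstractly presented algebra already has the listed monomials as a basis, which is precisely what makes (6.1)--(6.4) \emph{defining} relations. Your primary route for independence --- the arrow subalgebra is the Nichols algebra of a quantum linear space, then Andruskiewitsch--Schneider plus bosonization --- is also valid but rests on heavier input, namely Rosso's identification of the degree-one-generated subalgebra of the quantum shuffle algebra with a Nichols algebra, which the paper's elementary computation avoids. One point you should make explicit: spanning and independence in $H$ give the basis but not yet the presentation; you need to observe that your rewriting argument uses only the relations (6.1)--(6.4), hence is valid in the abstract algebra $A$ they present, so that the surjection $A\to H$ carries a spanning set of $A$ to a linearly independent set of $H$ and is therefore an isomorphism. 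That two-line observation is exactly the role Bergman's lemma plays in the paper and closes your argument.
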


\begin{proof}
Note that Proposition 3.4 can be established for our case directly,
and so there is no harm to assume that $\R$ concentrates at degree
0. By Proposition 6.2, there is a Hopf quiver $Q=Q(G,R)$ with
$R=\sum_{i=1}^t R_ig_i$ such that $H$ is isomorphic to the sub Hopf
algebra of the coquasitriangular Hopf algebra $kQ,$ determined by
the bicharacter $\R$ as in Theorem 3.3, generated by vertices and
arrows. Denote by $\{ x_{i,i'} | i'=1, \cdots, R_i \}$ the set of
arrows with source $1$ and target $g_i$ which are
$(1,g_i)$-primitive elements. Since any arrow of $Q$ can be obtained
as the ones with source $1$ multiplied by group-like elements, hence
$H$ is generated by $\{g_i,x_{i,i'} | i=1, \cdots, t; i'=1, \cdots,
R_i \}.$ Just as the arguments in Section 5, it is easy to verify
that they satisfy the relations (6.1)-(6.4). So as a linear space
$H$ is spanned by the set
\[\{gx_{1,1}^{\sigma_{1,1}} \cdots x_{1,R_1}^{\sigma_{1,R_1}} \cdots
x_{t,1}^{\sigma_{t,1}} \cdots x_{t,R_t}^{\sigma_{t,R_t}} | g \in G,
\sigma_{i,i'}=0,1 \}.\] Note that this set is linearly independent,
as is clear by straightforward computation using the multiplication
formula (2.10). On the other hand, by a standard application of
Bergman's diamond lemma \cite{b}, the algebra generated by
$\{g_i,x_{i,i'} | i=1, \cdots, t; i'=1, \cdots, R_i \}$ with
defining relations (6.1)-(6.4) has the aforementioned set as a
basis. It follows that (6.1)-(6.4) are sufficient defining relations
for $H.$ Now the proof is finished.
\end{proof}

\subsection{}
With a help of the lifting theorem of Andruskiewitsch and Schneider
\cite{as1} (Theorem 5.5, see also \cite{as3}), now we are ready to
classify finite-dimensional coquasitriangular pointed Hopf algebras.

\begin{theorem}
Let $(H,\R)$ be a finite-dimensional coquasitriangular pointed Hopf
algebra. Assume that it is connected and its group of group-likes
$G$ is not the unit group. Then there exist a generating set $\{ g_i
| i=1, \cdots, t \}$ of the group $G$ and for each $g_i$ a set of
$(1,g_i)$-primitive elements $\{ x_{i,i'} | i'=1, \cdots, R_i \}$
such that $H$ is generated by $\{g_i,x_{i,i'} | i=1, \cdots, t;
i'=1, \cdots, R_i \}$ with defining relations
\begin{gather}
\text{the defining relations of $G$ by the $g_i's,$} \\
x_{j,j'}g_i=\R(g_i , g_j)g_ix_{j,j'}, \ \forall i,j,j', \\
x_{i,i'}^2=\mu_{i,i'}(1-g_i^2), \ \forall i, i', \\
x_{j,j'}x_{i,i'}-\R(g_i ,
g_j)x_{i,i'}x_{j,j'}=\lambda_{i,i',j,j'}(1-g_ig_j), \ \forall (i,i')
\ne (j,j').
\end{gather}
Here $\mu_{i,i'}$ and $\lambda_{i,i',j,j'}$ are some appropriate
constants in the field $k.$ Moreover, $H$ has a PBW type basis
\[\{gx_{1,1}^{\sigma_{1,1}} \cdots x_{1,R_1}^{\sigma_{1,R_1}} \cdots
x_{t,1}^{\sigma_{t,1}} \cdots x_{t,R_t}^{\sigma_{t,R_t}} | g \in G,
\sigma_{i,i'}=0,1 \}.\]
\end{theorem}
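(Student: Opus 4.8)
The plan is to reduce the statement to the already-settled graded case and then feed the result into the lifting theorem for quantum linear spaces. First I would pass to the degeneration $(\gr(H),\gr(\R))$, which is again coquasitriangular by Lemma 4.1, and apply Proposition 6.2: since $H$ is finite-dimensional, connected and $G\neq\{1\}$, the group $G$ is abelian, $\R$ restricts to a skew-symmetric bicharacter with $\R(g_i,g_i)=-1$ for all $i$, and $(\gr(H),\gr(\R))$ is isomorphic to the sub coquasitriangular Hopf algebra of $(kQ,\mathfrak{R})$ generated by vertices and arrows. By Corollary 6.3 this graded object is presented by the relations (6.1)--(6.4). The crucial observation is that these relations say precisely that the skew-primitives $x_{i,i'}$ span a quantum linear space in the sense of Andruskiewitsch and Schneider: the $G$-action is diagonal with character $\chi_i(h)=\R(g_i,h)$, the self-braiding is $\chi_i(g_i)=\R(g_i,g_i)=-1$ (so each $x_{i,i'}$ has order $N=2$ and satisfies $x_{i,i'}^2=0$ in the graded algebra), and the skew-symmetry $\R(g_i,g_j)\R(g_j,g_i)=1$ is exactly the compatibility condition turning $V$ into a quantum linear space. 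Hence $\gr(H)$ is the bosonization of a quantum linear space by the group algebra $kG$.

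Second, I would apply the lifting theorem of Andruskiewitsch and Schneider (Theorem 5.5 of \cite{as1}) to $H$, whose associated graded Hopf algebra is the quantum linear space bosonization just identified. By Proposition 6.1, $H$ is generated by its group-likes and skew-primitives, so one may choose group-like generators $g_i$ and $(1,g_i)$-primitive elements $x_{i,i'}$ lifting the arrows of $Q$. The lifting theorem then limits the possible deformations of (6.1)--(6.4) to exactly the form displayed in (6.5)--(6.8): the group relations (6.5) and the commutation-with-group relations (6.6) are rigid and survive unchanged, because grouplikes stay grouplike and the coalgebra structure $\D(x_{i,i'})=x_{i,i'}\otimes 1+g_i\otimes x_{i,i'}$ forces the diagonal $G$-action through $\chi_i$; meanwhile the root-vector relations $x_{i,i'}^2=0$ may deform to $x_{i,i'}^2=\mu_{i,i'}(1-g_i^2)$ and the $q$-commutator relations may deform to $x_{j,j'}x_{i,i'}-\R(g_i,g_j)x_{i,i'}x_{j,j'}=\lambda_{i,i',j,j'}(1-g_ig_j)$. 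Here ``appropriate'' means the parameters obey the constraints of the theorem: $\mu_{i,i'}$ must vanish unless $g_i^2\neq 1$ and $\chi_i^2=\e$, and likewise $\lambda_{i,i',j,j'}$ must vanish unless $g_ig_j\neq 1$ and $\chi_i\chi_j=\e$.

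Finally, I would establish the PBW basis. Since $H$ is a filtered deformation whose associated graded algebra is the graded Hopf algebra of Corollary 6.3, the basis exhibited there lifts to the claimed ordered-monomial basis $gx_{1,1}^{\sigma_{1,1}}\cdots x_{t,R_t}^{\sigma_{t,R_t}}$ with $\sigma_{i,i'}\in\{0,1\}$; alternatively, one verifies directly by Bergman's diamond lemma that the overlaps among (6.5)--(6.8) are resolvable, so these monomials are linearly independent and spanning. Comparing with the graded case then shows that (6.5)--(6.8) are a complete set of defining relations.

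The main obstacle I anticipate is the careful matching of our coquasitriangular data to the precise hypotheses and output of the Andruskiewitsch--Schneider quantum linear space lifting theorem. One must check that the braiding coming from the skew-symmetric bicharacter $\R$ (with all $N_i=2$) genuinely falls within the scope of their theorem, and, more delicately, that when several skew-primitives are attached to the same or to distinct group elements no additional liftable relations are forced beyond the two families in (6.7)--(6.8); this is where the constraints on $\mu_{i,i'}$ and $\lambda_{i,i',j,j'}$ must be read off exactly rather than merely asserted.
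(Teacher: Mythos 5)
Your proposal is correct and follows essentially the same route as the paper: the paper's proof likewise reduces to the graded presentation of Corollary 6.3 (via Lemma 4.1, Proposition 6.2) and then invokes the Andruskiewitsch--Schneider lifting theorem for quantum linear spaces, finishing with the same surjection-plus-dimension-count argument for the defining relations and PBW basis. The only difference is in how the lifting details are certified: where you verify the quantum-linear-space hypotheses (self-braiding $\R(g_i,g_i)=-1$, compatibility from skew-symmetry) and quote the parameter constraints from \cite{as1}, the paper's ``detailed proof for the convenience of the reader'' re-derives them internally --- lifting the skew-primitives via Lemma 6.1 of \cite{as3} and showing, using the bicharacter property and skew-symmetry of $\R$ together with Proposition 6.2(2), that $\R(g_i^2,g_i^2)=\R(g_ig_j,g_ig_j)=1$ and hence that no nontrivial $(1,g_i^2)$- or $(1,g_ig_j)$-primitives exist, which forces the deformed relations (6.7)--(6.8) directly.
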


\begin{proof}
By assumption $(\gr(H),\gr(\R))$ satisfies the condition of
Corollary 6.3, hence it can be presented by generators with
relations as given there. Now the theorem follows from the lifting
theorem of Andruskiewitsch and Schneider. For the convenience of the
reader, we include a detailed proof.

Clearly, the coradical $H_{0}$ is the sub Hopf algebra generated by
$\{g_{i}|i=1,\cdots, t\}$.
 By Lemma 6.1 in \cite{as3}, there are elements $\{ x_{i,i'} | i'=1, \cdots, R_i
\}\in H$  corresponding to $\{ x_{i,i'} | i'=1, \cdots, R_i \} \in
\gr(H)$ and they satisfy
$$\Delta(x_{i,i'})=g_{i}\otimes x_{i,i'}+x_{i,i'}\otimes 1,\;\;x_{j,j'}g_i=\R(g_i , g_j)g_ix_{j,j'}$$
for any $i,i',j,j'$. Since $\gr(H)$ is generated by group-like
elements and skew primitive elements, we know $H$ is indeed
generated by $\{g_i,x_{i,i'} | i=1, \cdots, t; i'=1, \cdots, R_i
\}$. For any $i,i',j,j'$, direct computations show that the element
$x_{i,i'}^{2}$ is a $(1, g_{i}^{2})$-primitive element  and
 $x_{j,j'}x_{i,i'}-\R(g_i ,
g_j)x_{i,i'}x_{j,j'}$ is a  $(1,g_{i}g_{j})$-primitive element.

\textbf{Claim.} \emph{For any $i,j\in \{1,\cdots,t\}$, there are no
non-trivial $(1,g_{i}^{2})$-primitive elements and no non-trivial
$(1,g_{i}g_{j})$-primitive elements.}

Here the trivial $(1,g)$-primitive elements is define to be the
elements belonging to the space spanned by $1-g$. Otherwise, assume
$y_{i}$ is a non-trivial $(1,g_{i}^{2})$-primitive element. By
$g_{i}x_{i,i'}g_{i}^{-1}=\R(g_{i},g_{i})x_{i,i'}=-x_{i,i'}$,
$g_{i}^{2}x_{i,i'}g_{i}^{-2}=\R(g_{i},g_{i}^{2})x_{i,i'}=x_{i,i'}$.
Thus $\R(g_{i},g_{i}^{2})=1$ and so
\[\R(g^{2}_{i},g_{i})\R(g^{2}_{i},g_{i})=\R(g^{2}_{i},g^{2}_{i})=1.\]
Note that we always have
$g_{i}^{2}y_{i}g_{i}^{-2}=\R(g_{i}^{2},g_{i}^{2})y_{i}=-y_{i}$ which
implies $\R(g_{i}^{2},g_{i}^{2})=1.$ It's a contradiction.
Similarly, assume that $z$ is a non-trivial
 $(1,g_{i}g_{j})$-primitive element, then
 \begin{eqnarray*}
g_{i}g_{j}z(g_{i}g_{j})^{-1}&=& \R(g_{i}g_{j},g_{j})
\R(g_{i}g_{j},g_{i})z\\
&=& \R(g_{j},g_{j}) \R(g_{i},g_{i}) \R(g_{i},g_{j})
\R(g_{j},g_{i})z\\
&=&(-1)^{2}z=z.
 \end{eqnarray*}
Here we have used the fact that $\R$ is skew-symmetric. Thus this
also implies that $ \R(g_{i}g_{j},g_{i}g_{j})=1.$ This contradicts
to $\R(g_{i}g_{j},g_{i}g_{j})=-1$  by Proposition 6.2 (2).

Therefore, by the claim above, there are
$\mu_{i,i'},\lambda_{i,i',j,j'}\in k$ such that
$$x_{i,i'}^2=\mu_{i,i'}(1-g_i^2),\;\;x_{j,j'}x_{i,i'}-\R(g_i ,
g_j)x_{i,i'}x_{j,j'}=\lambda_{i,i',j,j'}(1-g_ig_j).$$ Consider a
Hopf algebra $A$ which is generated by group-like elements $\{ g_i |
i=1, \cdots, t \}$ and for each $g_i$ a set of $(1,g_i)$-primitive
elements $\{ x_{i,i'} | i'=1, \cdots, R_i \}$ and assume it
satisfies the relations (6.5)-(6.8). The preceding discussions imply
that there is a surjective Hopf algebra map from $A$ to $H$. By
comparing the dimensions, this surjective map is indeed an
isomorphism.
\end{proof}

We remark that, by carrying out the the same computational process
as the examples in Subsections 5.3 and 5.4, it is not difficult to
determine all the possible coquasitriangular structures of the
coquasitriangular pointed Hopf algebras we have just classified.
This is left for the interested reader.

\vskip 0.5cm

\noindent{\bf Acknowledgements:} The research was supported by the
NSF of China (10601052, 10801069). Part of the work was done while
the first author was visiting the Abdus Salam International Centre
for Theoretical Physics (ICTP). He expresses his sincere gratitude
to the ICTP for its support. Both authors would like to thank the
DAAD for financial support which enables them to visit the
University of Cologne.

\end{document}